\newcommand{\F}{\mathcal{F}}
\newcommand{\R}{\mathbb{R}}
\newcommand{\N}{\mathbb{N}}
\newcommand{\I}{\mathrm{i}}
\newcommand{\bigO}{\mathcal{O}}
\newcommand{\diff}{\mathrm{d}}
\newcommand{\Diff}{\mathrm{D}}
\newtheorem{theorem}{Theorem}[section]
\newtheorem{lemma}[theorem]{Lemma}
\newtheorem{proposition}[theorem]{Proposition}
\newtheorem{corollary}[theorem]{Corollary}
\newtheorem*{main-theorem}{Main Theorem}
\newtheorem*{remark*}{Remark}
\numberwithin{equation}{section}
\begin{document}

\allowdisplaybreaks

\title[Enhanced existence time in fKdV]{Enhanced existence time of solutions to evolution equations of Whitham type}

\author{Mats Ehrnstr\"om}
\author{Yuexun Wang}

\address{Department of Mathematical Sciences, NTNU Norwegian University of Science and Technology, 7491 Trondheim, Norway.}
\email{mats.ehrnstrom@ntnu.no}

 \address{School of Mathematics and Statistics, Lanzhou University, 370000 Lanzhou,
People's Republic of China.}
\address{Universit\' e Paris-Saclay, CNRS, Laboratoire de Math\'ematiques d'Orsay, 91405 Orsay, France.}
\email{yuexunwang@lzu.edu.cn}

\thanks{Both authors acknowledge the support by grant nos. 231668 and 250070 from the Research Council of Norway. The second author also acknowledges the support of the ANR project ANuI, and the partial support of grant no. 830018 from China.}

\subjclass[2010]{76B15, 76B03, 35S30, 35A20}
\keywords{Enhanced life span, Whitham type, dispersive equations}

\begin{abstract}
We show that Whitham type equations \(u_t + u u_x -\mathcal{L} u_x = 0\), where \(L\) is a general Fourier multiplier operator of order \(\alpha \in [-1,1]\), \(\alpha\neq 0\), allow for small solutions to be extended beyond their ordinary existence time.  The result is valid for a range of quadratic dispersive equations with inhomogenous symbols in the dispersive regime given by the parameter \(\alpha\).
\end{abstract}
\maketitle


\section{Introduction}
The enhanced existence time of small solutions to weakly dispersive water wave equations has gained a lot of attention. Going back to Shatah's normal form \cite{MR803256} and the subsequent work of Delort and collaborators on the Klein--Gordon equation \cite{MR2056326}, it has obtained renewed momentum both through the work on the Burgers--Hilbert equation \cite{MR2982741}, but more generally through the analysis related to global well-posedness for the water-wave problem \cite{MR3460636, MR2507638, MR3314514, MR2993751}, as well as in dedicated papers on improved lifespan of solutions to the same problem \cite{MR3535894, MR3962880, MR4246389}.

The problem concerns the extension of small data beyond the typical hyperbolic existence time: for quadratic equations this means existence times of orders \(|u_0|^{-2}\) in the appropriate norm, also sometimes called cubic lifespan by the terms arising in the energy estimates. Such estimates are of natural interest where singularity formation is present, and general well-posedness in an entire regular space is not possible. At the centre lies the interaction of frequencies via multiple wave interactions. The use of the normal form transform gives rise to generally multidimensional symbols that feature singularities where frequency directions interact; by controlling these by means of multilinear estimates in the appropriate norm one obtains the necessary energy estimate.

These questions have been investigated in several settings in the water-wave problem, including  gravitational  \cite{MR3535894} and pure surface tension \cite{MR3667289} waves on infinite depth, arbitrary lifespan on restricted sets of conditions for periodic waves \cite{MR3839269}, extended life span for all parameter values in the periodic gravity-capillary waves \cite{MR4246389}, and more. While the water-wave problem in its full formulation is involved and hardly treatable without exact reformulations using its Hamiltonian, holomorphic or paralinear structure, several model equations display some of the same frequency and qualitative behaviour. The Burgers--Hilbert equation \cite{MR2982741} and more generally the fractional Korteweg--de~Vries (fKdV) equation \(\partial_tu+u\partial_xu-|D|^\alpha\partial_xu=0\), \(\alpha \in \R\), are examples of this \cite{MR3188389}. The Burgers--Hilbert equation is the case \(\alpha = -1\) in the fKdV scale, the inviscid Burgers equation is  \(\alpha = 0\), and the gravity and capillary water wave problems on infinite depth correspond to  fractional values \(\alpha = \pm\frac{1}{2}\) in terms of modelling and linear dispersion \cite{lannes2013water}. More generally the fKdV equation is a good measure of the balance between nonlinearity and dispersion arising in water wave equations.

In \cite{MR3995034} we showed that the main results from \cite{MR2982741} and  \cite{MR3348783} for the Burgers--Hilbert equation extend to set \(\alpha \in [-1,1] \setminus \{0\}\) in the fKdV scale. The operator \(|D|\) in this equation relates to the case of infinite depth \(h\) in the linear dispersion term \(\xi \tanh(h\xi)\). On unit depth, the symbol is
\begin{equation}\label{eq:dispersion}
\left(\frac{(1+\beta\xi^2)\tanh(\xi)}{\xi}\right)^{\frac{1}{2}}
\end{equation}
where \(\beta = 0\) in the gravity case, and  \(\beta > 0\) for capillary-gravity waves. The corresponding inhomogeneous quadratic equations are called the Whitham and capillary-gravity Whitham equations, and are part of a class of very weakly dispersive nonlinear equations \cite{EGW11, MR4072387}. In this investigation, we extend the results from \cite{MR3995034} to that more general class of inhomogeneous model equations, with the goal of providing simple but still general assumptions on the dispersive operator.  We do so in the spirit of  the setup in \cite{EGW11}, considering the family of Whitham-type equations
\begin{align}\label{eq:Whitham type}
	\partial_tu+u\partial_xu-\mathcal{L}\partial_xu=0,
\end{align}
where the operator \(\mathcal{L}\) is a Fourier multiplier with symbol \(p\),
\[
\mathcal{F}(\mathcal{L}f)(\xi)=p(\xi)\hat{f}(\xi).
\]
Here,
\(
\mathcal{F} f(\xi ) = \int f(x) \mathrm{e}^{-\I x \xi}\, \diff x,
\)
so that \(\partial_x \sim \I \xi\). We shall use \(\Diff = -i \partial_x \sim  \xi\) and \(|\Diff| \sim |\xi|\), and solutions \(u\colon [0,T]\times \R \mapsto \R\) will be considered to be regular. What we need are the following assumptions:\\[-6pt]

\begin{itemize}
\item[(A1)]  The symbol \(p\colon \R \to \R\) is  \(C^2\),  even  and strictly monotone on \([0,+\infty)\);\label{assumption:A1}\\[-6pt]

\item[(A2)]  at the far field,
\begin{align}
	|p^{(i)}(\xi)|\eqsim |\xi|^{\alpha-i},  \  |\xi|\gg 1, \qquad i=0,1,2, \label{assumption:A2}
\end{align}
where \(\alpha \in [-1,1] \setminus \{0\}\);\\[-6pt]

\item[(A3)] and one has a local expansion
\begin{align}
	p(\xi)=p(0)+\xi^{2j_*} \tilde p_{2j_*}(\xi),  \qquad  |\xi|\ll 1, \label{assumption:A3}
\end{align}
where \(j_*\in \mathbb{N}\)\footnote{We use \(\N = \{1,2,3, \ldots\}\). } and \(\tilde p_{2j_*}\)  is locally Lipschitz with \(\tilde p_{2j_*}(0) \neq 0\).\\
\end{itemize}

\noindent In difference to \cite{EGW11} we allow for both negative- and positive-order operators, displaying decay, respectively growth, in \eqref{assumption:A2}. The above assumptions describe the results in terms of regularity, the behaviour of the dispersive symbol at infinity, and the order of its local extremum at the origin. This is in line with a research program to describe nonlocal dispersive equations in terms of a quantifiable properties of their nonlocalities and nonlinearities, see for example \cite{MR4097537,EJMR19,MR4002168}.

As mentioned above, a famous instance of a symbol \(p\) satisfying our assumptions is the linear dispersion for uni-directional water waves  \eqref{eq:dispersion}, with \(\beta = 1\) and \(\beta = 0\) corresponding to  \(\alpha=1/2\) and \(\alpha = - 1/2\), respectively, with \(j_*=1\) near the origin. One can build any even, inhomogeneous, one-sided monotone function with a local extremum at the origin and sublinear decay or growth at infinity to fit the assumptions. Here, the order  \(j_* \geq 1\) of the local extremum at the origin is related to regularity of solutions. Our main theorem is the following.

\begin{theorem}\label{main theorem}
Let \(\alpha \in [-1,1]\setminus\{0\}\) and \(N\geq \max\{3,2j_*-1\}\).  Assume that 
\begin{equation*}
\|u_0\|_{H^N(\mathbb{R})}\leq\varepsilon,
\end{equation*}
for some sufficiently small constant \({\varepsilon}>0\) which depends only on \(\alpha\) and \(N\).
Then there exist a positive number \(T\gtrsim \varepsilon^{-2}\) and a unique solution \(u\) in \(C([-T,T];H^{N}(\mathbb{R}))\cap C^1([-T,T];H^{N-2}(\mathbb{R}))\) of  \eqref{eq:Whitham type} with \(u(0,x)=u_0(x)\) such that 	
\begin{equation*}
\|u\|_{L^\infty([-T,T];H^{N}(\mathbb{R}))}\lesssim \varepsilon.
\end{equation*}
\end{theorem}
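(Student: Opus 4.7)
I approach Theorem~\ref{main theorem} by a modified-energy (normal-form) bootstrap argument that upgrades the quadratic life span of a direct Sobolev estimate to a cubic one, extending the strategy of \cite{MR3995034} from the homogeneous fKdV scale to the present inhomogeneous family. The assumptions (A2)--(A3) enter through a resonance analysis of the quadratic interaction: (A2) controls the high-frequency regularity of the normal form, while (A3) controls its low-frequency behaviour.

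\textbf{Local existence and bootstrap setup.} Since \(\mathcal{L}\partial_x\) is skew-adjoint and of order at most \(1+\alpha\leq 2\) by (A2), standard quasilinear energy estimates combined with Kato--Ponce commutator bounds produce a unique solution \(u\in C([-T_\ast,T_\ast];H^N)\cap C^1([-T_\ast,T_\ast];H^{N-2})\) with \(T_\ast \gtrsim \varepsilon^{-1}\), together with a blow-up criterion in \(\|u\|_{H^N}\). I then set up the bootstrap assumption
\[
\|u(t)\|_{H^N}\leq 2\varepsilon\quad \text{on }|t|\leq T,
\]
and aim to improve this to \(\tfrac{3}{2}\varepsilon\) for every \(T\leq c\,\varepsilon^{-2}\).

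\textbf{Modified energy.} A direct Sobolev estimate yields only
\(
\tfrac{1}{2}\tfrac{d}{dt}\|u\|_{H^N}^2\lesssim \|\partial_x u\|_{L^\infty}\|u\|_{H^N}^2\lesssim \varepsilon\,\|u\|_{H^N}^2,
\)
giving the quadratic life span. To save an extra factor of \(\varepsilon\), I introduce
\[
\mathcal{E}(u) = \|u\|_{H^N}^2 + \mathcal{Q}(u,u,u),
\]
where the symmetric trilinear form \(\mathcal{Q}\) has Fourier kernel
\[
\widehat{\mathcal{Q}}(\xi,\eta) = \frac{m(\xi,\eta)}{\Phi(\xi,\eta)},\qquad \Phi(\xi,\eta) = \omega(\xi)-\omega(\eta)-\omega(\xi-\eta),\qquad \omega(\xi)=\xi\,p(\xi),
\]
with \(m\) the top-order quadratic symbol coming from the pairing of \(\partial_x(u u)\) against \(u\) at Sobolev level \(2N\). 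Substituting \(\partial_t u = \mathcal{L}\partial_x u - u\partial_x u\) into \(\tfrac{d}{dt}\mathcal{Q}(u,u,u)\), the \(\mathcal{L}\partial_x\)-contributions reproduce \(\Phi\cdot\widehat{\mathcal{Q}}\), so that every non-resonant quadratic contribution to \(\tfrac{d}{dt}\mathcal{E}(u)\) cancels and only cubic-in-\(u\) terms remain.

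\textbf{Resonance analysis (main obstacle) and closure.} The technical heart of the argument is to establish
\(
|\mathcal{Q}(u,u,u)|\lesssim \|u\|_{H^N}^3
\)
together with
\(
\bigl|\tfrac{d}{dt}\mathcal{E}(u)\bigr|\lesssim \|u\|_{H^N}^2\,\mathcal{E}(u),
\)
which reduces to controlling the multiplier \(m/\Phi\) on a Littlewood--Paley decomposition. By (A2), in the non-degenerate high-frequency regime one has \(|\Phi(\xi,\eta)|\gtrsim |\xi\eta(\xi-\eta)|\max(|\xi|,|\eta|,|\xi-\eta|)^{\alpha-1}\), providing a gain of \(|\xi|^{\alpha+1}\) derivatives that is borderline (and sharp) at \(\alpha=-1\); by (A3), near the origin \(\Phi\) factors through \(\xi\eta(\xi-\eta)\) up to a factor comparable to \(|\xi|^{2j_*-2}\tilde p_{2j_*}(0)\), and the removal of this factor costs at most \(2j_*-1\) derivatives, giving rise to the assumption \(N\geq \max\{3,2j_*-1\}\). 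Once the two displayed bounds are in place, Gronwall gives \(\mathcal{E}(u(t))\leq \mathcal{E}(u_0)\exp(C\varepsilon^2|t|)\); since \(\mathcal{E}(u)\eqsim \|u\|_{H^N}^2\) in the smallness regime, the bootstrap strictly closes on \(|t|\leq c\varepsilon^{-2}\), which together with the continuation criterion from local existence completes the proof.
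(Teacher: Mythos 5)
Your overall strategy is the same as the paper's: a normal-form/modified-energy correction whose Fourier kernel is the quadratic symbol divided by the resonance function \(\Phi(\xi,\eta)=\omega(\xi)-\omega(\eta)-\omega(\xi-\eta)\), \(\omega(\xi)=\xi p(\xi)\), an equivalence of the modified energy with \(\|u\|_{H^N}^2\) for small data, a quartic energy inequality, and a Gr\"onwall/continuity closure on \(|t|\lesssim\varepsilon^{-2}\). The resonance analysis you sketch is also the paper's (its Lemma~\ref{lemma:m}), though your stated high-frequency lower bound is off by one power of the largest frequency: the correct equivalence is \(|\Phi|\eqsim \frac{|\xi\eta(\xi-\eta)|}{\xi^2+\eta^2}\,(1+r^{\alpha})\eqsim \min(|\xi|,|\eta|,|\xi-\eta|)\cdot\max(|\xi|,|\eta|,|\xi-\eta|)^{\alpha}\) in the regime where all three frequencies are large, not \(|\xi\eta(\xi-\eta)|\max(\cdot)^{\alpha-1}\) (test \(p(\xi)=|\xi|^{\alpha}\), \(\xi=2\lambda\), \(\eta=\lambda\)). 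The conclusion you draw from it (a gain of \(1+\alpha\) derivatives, borderline at \(\alpha=-1\)) happens to match the correct bound, not the one you wrote.

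The genuine gap is in the sentence asserting that \(\bigl|\tfrac{\diff}{\diff t}\mathcal{E}(u)\bigr|\lesssim\|u\|_{H^N}^2\,\mathcal{E}(u)\) ``reduces to controlling the multiplier \(m/\Phi\) on a Littlewood--Paley decomposition.'' It does not. After the cubic terms cancel, the remaining quartic terms carry \emph{one extra derivative}: schematically they pair \(\partial_x^kB(u,u)\) against \(\partial_x^k(u\partial_xu)\) and \(\partial_x^ku\) against \(\partial_x^kB(u\partial_xu,u)\), and the total symbol has order \(2k+2-(1+\alpha)\geq 2k+1\) for \(\alpha\leq 1\). A pointwise (monomial, Coifman--Meyer type) bound on \(m/\Phi\) therefore cannot close the estimate in \(H^k\); the paper states explicitly that such estimates ``are untrue in that formulation.'' What is actually needed is a cancellation \emph{between} the two quartic terms: one first exploits that their highest-order parts \(F_0\) and \(G_0\) cancel exactly (using \(n(\xi-\eta,\eta)=\overline{n(\eta-\xi,\xi)}\)), and then the leftover terms \(F_1,G_1\) still carry one surplus derivative, which is removed only through a sequence of global changes of variables in Fourier space on carefully chosen symmetric frequency regions, culminating in the commutator
\[
\frac{m(\xi-\eta,\sigma)}{\xi-\eta+\sigma}-\frac{m(\xi-\eta,\eta)}{\xi},
\]
whose difference structure gains \emph{two} orders via a second-order Taylor expansion of \(p\) (Proposition~\ref{prop: F1+G1 full}). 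Without this step your bootstrap does not close, so the proposal as written proves only the quadratic life span it set out to improve. (A smaller slip of the same kind: after the cancellation, what remains in \(\tfrac{\diff}{\diff t}\mathcal{E}\) is quartic in \(u\), not ``cubic'' as you state; your target inequality is the quartic one, so this is presumably a typo.)
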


The proof is built upon a normal form formulation and the modified energy approach from \cite{MR3348783},  and is a generalisation and simplification of the proof devised in \cite{MR3995034}. Note that for \(\alpha \leq \frac{1}{2}\) the equations are quasilinear and cannot be solved with standard contraction techniques \cite{MR1885293}. Although the role of the parameter \(\alpha\) is not visible in our main theorem, an inspection of the proofs shows that values of \(|\alpha|\) close to the origin yield worse estimates than \(|\alpha|\) close to unit size for both positive and negative values of \(\alpha\), even though \(\alpha = -1\) is the current threshold for the method.  As in \cite{MR3995034}, we follow the method of proof from \cite{MR3348783} up to the point of Lemma~\ref{lemma:equivalent norm} in the paper at hand. We formulate in Lemma~\ref{lemma:m} and Corollary~\ref{cor:m} the general properties of the multiplier needed for the energy estimates later in the paper. These results are similar to other ones in the literature, see for example Lemma~2.3 in \cite{feola2020long}, and are
a fundamental part of the paper. The class of equations has no general integrability structure, and singularities at `low' frequencies and surplus growth at `high' frequencies both have to be dealt with. Explained further in \cite{MR3995034}, we emphasise the commutator introduced in \eqref{eq:tilde F + G}, which is handled in Proposition~\ref{prop: F1+G1 full}. Similar commutators appear in other works (see for example Section 4 in \cite{MR3862598}), and are related to cancellations in the resonant set. In our case integration by parts and splitting of the frequency domain reduce the energy estimates required in the last step, while the final commutator is built on a global transformation of variables in Fourier space, serving for a reduction of two orders of differentiation. We have not been able to find whether the expansion in small variables \((\xi - \eta)/\eta\) and \((\sigma - \eta)/\eta\) that we use in high frequencies has been applied elsewhere. We naturally expect that the methods could be used also on other equations with similar forms of multipliers, see for example \cite{DN19_extended}.

Finally, our results are in line with what is known for these types of equations in the same parameter range \cite{MR3188389,MR3317254,MR3906854}. Local well-posedness follows by classical methods \cite{Abdelouhab1989nonlocal}, whereas it is known to exist (i) small and medium-sized travelling waves that are either smooth \cite{MR3485840,MR3841973,MR4061635} or non-smooth \cite{MR4002168,BD18} and existing for all times, and (ii) time-dependent solutions that are not small, but break down in finite time \cite{MR3682673,SW20}.

The outline of the paper is as follows. In Section~\ref{sec:modified energy} we will introduce the pseudoproduct which yields the normal form for the modified energy. In Section~\ref{sec:m} we analyse the multiplier which constitutes the main problem after the transformation, and in Section~\ref{equivalent norm} use this to prove the equivalence of norms. Section~\ref{energy estimates}, finally, is the heart of the paper. It is almost entirely carried out in Fourier space, and contains the quartic energy estimates necessary to close the argument via a continuity argument.

\section{The modified energy}\label{sec:modified energy}
\noindent Standard theory \cite{MR533234} can be used to show that there exists a positive number \(T\gtrsim \varepsilon^{-1}\) and a unique  solution \(u\in C([0,T];H^N(\R))\) of \eqref{eq:Whitham type}. Therefore, to prove Theorem \ref{main theorem}, we need only to prove an a priori \(H^N(\R)\)-bound for the classical solution \(u\in C([0,T];H^N(\R))\). 
For this we define the modified energy by  
\begin{align}\label{eq:modified k-energy}
	E^{(k)}(t)=\|\partial_x^ku\|_{L^2}^2+2\big(\partial_x^ku, \partial_x^kB(u,u)\big)_2,
\end{align}
where the bilinear form \(B\)  defined as a pseudo-product 
\begin{align}\label{eq:bilinear form}
	\mathcal{F}(B(f_1,f_2))(\xi)=\int_{\R}m(\xi-\eta,\eta)\hat{f}_1(\xi-\eta)\hat{f}_2(\eta)\,\diff \eta,
\end{align}
and the multiplier \(m\) is given by 
\begin{align}\label{eq:multiplier}
	m(\xi-\eta,\eta)=\frac{\xi}{2\big[p(\xi-\eta)(\xi-\eta)+ p(\eta)\eta-p(\xi)\xi
		\big]}.
\end{align}
Thus, the meaning of  \(m\) for two general variables \((a,b)\) is 
\begin{align*}\label{eq:multiplier extended}
m(a,b)=\frac{a+b}{2\big[p(a) a+p(b) b-p(a+b) (a+b)\big]}.
\end{align*}
The modified  energy \eqref{eq:modified k-energy} removes all the cubic terms from the equation and itself satisfies a quartic equation.
\begin{lemma}\label{lemma:quartic form} We have
	\begin{equation}\label{eq:quartic form}
	\begin{aligned}
	\frac{1}{2}\frac{\diff}{\diff t}E^{(k)}(t)
	&=\big(\partial_x^k(-u\partial_xu), \partial_x^kB(u,u)\big)_2 + 2\big(\partial_x^ku, \partial_x^kB(-u\partial_x u,u)\big)_2.
	\end{aligned}
	\end{equation}
\end{lemma}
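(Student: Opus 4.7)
The plan is to differentiate $E^{(k)}$ in time, substitute \eqref{eq:Whitham type} to eliminate $u_t$, and use the defining property of the multiplier $m$ to cancel the cubic-in-$u$ contributions. I would begin by noting that $m(a,b)$ is symmetric under $a\leftrightarrow b$ (in \eqref{eq:multiplier} the numerator $\xi=a+b$ and the bracket in the denominator are both symmetric), so $B(f,g)=B(g,f)$. Differentiating \eqref{eq:modified k-energy} then yields
\begin{equation*}
\tfrac12\tfrac{\mathrm d}{\mathrm dt}E^{(k)}(t)
=(\partial_x^k u,\partial_x^k u_t)_2
+(\partial_x^k u_t,\partial_x^k B(u,u))_2
+2(\partial_x^k u,\partial_x^k B(u_t,u))_2.
\end{equation*}
Substituting $u_t=\mathcal{L}\partial_xu-u\partial_xu$ into each copy of $u_t$ and separating the $\mathcal{L}\partial_x$-linear pieces from the purely nonlinear ones, the latter already produce the two terms on the right-hand side of \eqref{eq:quartic form}, up to a leftover contribution $-(\partial_x^k u,\partial_x^k(u\partial_xu))_2$ from the first summand.

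Next, I would show that the three remaining $\mathcal{L}\partial_x$-linear terms sum to $(\partial_x^k u,\partial_x^k(u\partial_xu))_2$ and cancel this leftover. Since $p$ is real and even by (A1), $\mathcal{L}\partial_x$ has purely imaginary symbol $i\xi p(\xi)$ and is skew-adjoint on $L^2$, so that $(\partial_x^k u,\partial_x^k\mathcal{L}\partial_xu)_2=0$, and after moving $\mathcal{L}\partial_x$ off the first factor in the second, the contributions combine as
\begin{equation*}
\bigl(\partial_x^k u,\,\partial_x^k[-\mathcal{L}\partial_x B(u,u)+2B(\mathcal{L}\partial_xu,u)]\bigr)_2.
\end{equation*}
The task therefore reduces to the pointwise identity
\begin{equation*}
\mathcal{L}\partial_x B(u,u)-2B(\mathcal{L}\partial_x u,u)=-u\partial_xu.
\end{equation*}

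This identity is the crux of the lemma and the very motivation behind the choice \eqref{eq:multiplier}. I would verify it directly in Fourier: after symmetrizing $2B(\mathcal{L}\partial_xu,u)$ through $B(f,g)=B(g,f)$, the Fourier multiplier on the left at frequency $\xi$ reads
\begin{equation*}
i\int m(\xi-\eta,\eta)\bigl[\xi p(\xi)-(\xi-\eta)p(\xi-\eta)-\eta p(\eta)\bigr]\hat u(\xi-\eta)\hat u(\eta)\,\mathrm d\eta,
\end{equation*}
and the bracket is precisely $-2$ times the denominator of $m$ in \eqref{eq:multiplier}. The integrand therefore collapses to $-\tfrac{i\xi}{2}\hat u(\xi-\eta)\hat u(\eta)$, which, under the bilinear convention fixed in \eqref{eq:bilinear form}, is the Fourier representation of $-\tfrac12\partial_x(u^2)=-u\partial_xu$. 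All cubic contributions then cancel and the quartic formula \eqref{eq:quartic form} is obtained.
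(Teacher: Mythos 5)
Your proof is correct and follows essentially the same route as the paper: differentiate the modified energy, use the symmetry of \(B\) and the skew-adjointness of \(\mathcal{L}\partial_x\) (valid since \(p\) is real and even), and then invoke the defining Fourier identity for \(m\) to cancel all cubic terms. The only blemish is the phrase ``the bracket is precisely \(-2\) times the denominator of \(m\)'': the bracket \(\xi p(\xi)-(\xi-\eta)p(\xi-\eta)-\eta p(\eta)\) is in fact \(-\tfrac12\) times the denominator \(2\big[p(\xi-\eta)(\xi-\eta)+p(\eta)\eta-p(\xi)\xi\big]\), though the conclusion you draw from it, namely that the integrand collapses to \(-\tfrac{\I\xi}{2}\hat u(\xi-\eta)\hat u(\eta)\), is the correct one.
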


\begin{proof} We first claim that
	\begin{align}\label{eq:bilinear identity}
	-u\partial_xu-\mathcal{L}\partial_xB(u,u)+B(\mathcal{L}\partial_x u,u)+B(u,\mathcal{L}\partial_x u)=0.
	\end{align}
	Indeed, taking Fourier transform on \eqref{eq:bilinear identity} yields  
	\begin{equation*}
	\begin{aligned}
	&\mathcal{F}\big\{-u\partial_xu-\mathcal{L}\partial_xB(u,u)+B(\mathcal{L}\partial_x u,u)+B(u,\mathcal{L}\partial_x u)\big\}(\xi)\\
	&=\I \int_{\R}\bigg( -\frac{\xi}{2}+m(\xi-\eta,\eta)\left[-p(\xi)\xi+p(\xi-\eta)(\xi-\eta)+ p(\eta)\eta
	\right]\bigg)\hat{u}(\xi-\eta)\hat{u}(\eta)\, \diff \eta\\
	&=0,
	\end{aligned}
	\end{equation*}
	where we have used the definition \eqref{eq:multiplier} of \(m\). From the definition of the modified energy \eqref{eq:modified k-energy} and equation \eqref{eq:Whitham type}, one calculates that
\begin{equation}\label{1}
\begin{aligned}
&\frac{1}{2}\frac{\diff}{\diff t}E^{(k)}(t)\\
&=\big(\partial_x^k\partial_tu,\partial_x^ku)_2+(\partial_x^k\partial_tu, \partial_x^kB(u,u)\big)_2
+\big(\partial_x^ku, \partial_x^k\partial_tB(u,u)\big)_2\\
&=\big(\partial_x^k(\mathcal{L}\partial_xu-u\partial_xu),\partial_x^ku\big)_2+\big(\partial_x^k(\mathcal{L}\partial_xu
-u\partial_xu), \partial_x^kB(u,u)\big)_2\\
&\quad+2\big(\partial_x^ku, \partial_x^kB(\mathcal{L}\partial_x u-u\partial_ x u,u)\big)_2\\
&= -\big(\partial_x^ku,\partial_x^k(u\partial_xu)\big)_2 -\big(\partial_x^ku, \partial_x^k(\mathcal{L}\partial_xB(u,u))\big)_2\\
&\quad +\big(\partial_x^k(-u\partial_xu), \partial_x^kB(u,u)\big)_2 +2\big(\partial_x^ku, \partial_x^kB(\mathcal{L}\partial_x u-u\partial_ x u,u)\big)_2,
\end{aligned}
\end{equation} 	
Since the multiplier \(m\) is symmetric in \(\xi-\eta\) and \(\eta\), so is the bilinear form \(B(f_1, f_2)\) on \(f_1\) and \(f_2\). Thus 
\(B(\mathcal{L}\partial_x u,u)=B(u,\mathcal{L}\partial_x u)\). Finally inserting \eqref{eq:bilinear identity} into \eqref{1} gives  the quartic equation for the evolution of the modified energy.  
	
\end{proof}

Our main two tasks are, on the one hand, to show that the modified energy \(E^{(k)}(t)\) is almost equivalent to the Sobolev energy provided the solution is small in \(H^N(\R)\), on the other hand, to obtain a quartic-type a priori estimate on \(E^{(k)}(t)\). More precisely, we will prove the following two lemmas:    
\begin{lemma}\label{lemma:equivalent norm} 
Let \(\alpha \in [-1,1]\setminus\{0\}\). For any \(N\geq \max\{2,2j_*-1\}\), one has
\begin{align*}
  \sum_{k=2j_*-1}^NE^{(k)}(t)+\|u\|_{L^2}^2=\big(1+\bigO(\|u\|_{H^2})\big)\|u\|_{H^N}^2.
\end{align*}
\end{lemma}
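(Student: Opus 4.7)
The identity splits into a quadratic part $\|u\|_{L^2}^2 + \sum_{k=2j_*-1}^N \|\partial_x^k u\|_{L^2}^2$ and a cubic correction $2\sum_{k=2j_*-1}^N (\partial_x^k u, \partial_x^k B(u,u))_2$. The plan is to identify the first with $\|u\|_{H^N}^2$ by Plancherel, and to bound the second by $\|u\|_{H^2}\|u\|_{H^N}^2$ via a trilinear Fourier estimate based on Lemma~\ref{lemma:m} and Corollary~\ref{cor:m}.

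\textbf{Quadratic part.} By Plancherel,
\[
\|u\|_{L^2}^2 + \sum_{k=2j_*-1}^N \|\partial_x^k u\|_{L^2}^2 = \int_{\R}\Bigl(1+\sum_{k=2j_*-1}^N \xi^{2k}\Bigr)|\hat u(\xi)|^2\,\diff\xi,
\]
and since $1\le 2j_*-1\le N$, the weight in parentheses is $\eqsim \langle\xi\rangle^{2N}$ uniformly in $\xi\in\R$ -- the missing intermediate powers $\xi^{2k}$ for $0<k<2j_*-1$ being dominated by $1+\xi^{2N}$. This yields $\|u\|_{H^N}^2$ up to absolute constants.

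\textbf{Cubic correction.} Using \eqref{eq:bilinear form} and Plancherel, each cubic term takes the form
\[
(\partial_x^k u, \partial_x^k B(u,u))_2 = \int\!\!\int \xi^{2k}\,\overline{\hat u(\xi)}\,m(\xi-\eta,\eta)\,\hat u(\xi-\eta)\,\hat u(\eta)\,\diff\eta\,\diff\xi.
\]
I would split the frequency space into regions according to the relative sizes of $|\xi|,|\xi-\eta|,|\eta|$ and in each region apply the pointwise bound on $m$ furnished by Lemma~\ref{lemma:m} and Corollary~\ref{cor:m}. In the region where one frequency is much smaller than the other two, the lowest-frequency factor is handled via $H^2\hookrightarrow L^\infty$, while the remaining two factors absorb the $2k$ derivatives into $H^N$-type norms; in the balanced region where all three frequencies are comparable, the weight $\xi^{2k}$ may be distributed evenly among the three factors and combined with the multiplier bound. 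A Cauchy--Schwarz in Fourier then yields
\[
\bigl|(\partial_x^k u, \partial_x^k B(u,u))_2\bigr| \lesssim \|u\|_{H^2}\,\|u\|_{H^N}^2
\]
uniformly in $k\in\{2j_*-1,\ldots,N\}$, and summing in $k$ produces the $\mathcal{O}(\|u\|_{H^2})\|u\|_{H^N}^2$ correction.

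\textbf{Main obstacle.} The admissibility condition $N\ge 2j_*-1$ is exactly what is needed so that the factor $\xi^{2k}$ compensates the low-frequency singularity of $m$ induced by the flattening $p(\xi)\xi = p(0)\xi + \mathcal{O}(\xi^{2j_*+1})$ at the origin coming from (A3); this is the only place where $j_*$ enters. With Lemma~\ref{lemma:m} and Corollary~\ref{cor:m} treated as a black box, the remaining work is careful bookkeeping of the exponents uniformly in $\alpha\in[-1,1]\setminus\{0\}$, reconciling the positive- and negative-$\alpha$ regimes in which the high-frequency behaviour of $m$ qualitatively differs. The real crux of the proof is loaded into the multiplier estimates of Section~\ref{sec:m}; once those are available, the trilinear argument above is mechanical.
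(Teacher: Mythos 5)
Your treatment of the quadratic part and your identification of why $k\ge 2j_*-1$ is needed (to kill the low-frequency singularity $m_1\sim |\eta(\xi-\eta)\xi^{2j_*-2}|^{-1}$ coming from (A3)) are both correct and match the paper. But the claim that, once Lemma~\ref{lemma:m} and Corollary~\ref{cor:m} are in hand, the cubic estimate is ``mechanical bookkeeping'' via distributing $\xi^{2k}$ and applying Cauchy--Schwarz has a genuine gap at the top order, and it is exactly the point where the paper has to do real work.

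The problem is the high-frequency \emph{growth} of $m$, not only its low-frequency singularity. By Corollary~\ref{cor:m}, in the region where, say, $|\xi-\eta|\lesssim 1$ and $|\eta|\eqsim|\xi|\gg 1$ one has $|m(\xi-\eta,\eta)|\eqsim |\xi|^{1-\alpha}/|\xi-\eta|$ (and $\eqsim|\xi|/|\xi-\eta|$ for $\alpha<0$), i.e.\ the multiplier itself carries up to one extra derivative on the high frequencies. In the Leibniz term where all $k$ derivatives from one copy of $\partial_x^k$ land on a single factor (the term $A_0$ of the paper), the integrand then contains a total weight of order $|\eta|^{k}|\xi|^{k+1}$ against the three factors $\hat u(\xi-\eta)\hat u(\eta)\overline{\hat u(\xi)}$, with the low-frequency factor $\hat u(\xi-\eta)$ unable to absorb anything. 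No redistribution of the weight $\xi^{2k}$ ``evenly among the three factors'' helps here: the best pointwise bound yields $\|u\|_{H^{k+1}}\|u\|_{H^k}\|u\|_{H^1}$, which is not controlled by $\|u\|_{H^2}\|u\|_{H^k}^2$. So the purely pointwise trilinear argument you describe fails for this term.

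What the paper does instead is exploit a structural cancellation that is invisible to the pointwise bounds: writing $\xi=(\xi-\eta)+\eta$ splits $A_0$ into a good part $A_0^1$ (with weight $(\xi-\eta)\eta^k\xi^{k-1}$) and a bad part $A_0^2$ (with weight $\eta^{k+1}\xi^{k-1}$); the change of variables $\xi\leftrightarrow\eta$ together with the algebraic identity $m(\xi-\eta,\eta)\eta+m(\eta-\xi,\xi)\xi=0$ (a consequence of the symmetry $\varphi(a,b)=\varphi(-(a+b),b)$ of the denominator) shows $A_0+A_0^2=0$, hence $A_0=\tfrac12 A_0^1$. In $A_0^1$ one derivative has been moved onto the factor $\hat u(\xi-\eta)$, so that even after paying the extra derivative hidden in $m_2$ the total order on the two high-frequency factors is $2k$, and only then do the estimates \eqref{21}--\eqref{24} close with the bound $\|u\|_{H^2}\|u\|_{H^k}^2$. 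This symmetrization step is the missing idea in your proposal; without it the highest-order term cannot be estimated.
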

\begin{lemma}\label{lemma:energy estimates} Let \(\alpha \in [-1,1]\setminus\{0\}\). Then
	\begin{align*}
	\frac{\diff}{\diff t}E^{(k)}(t)\lesssim \left( \|u\|_{H^2} \|u\|_{H^3} +  \|u\|_{H^k}^2 \right) \|u\|_{H^k}^2, \quad    k\geq 2j_*-1.
	\end{align*}
\end{lemma}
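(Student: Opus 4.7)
The starting point is the quartic identity of Lemma~\ref{lemma:quartic form}, whose right-hand side is already quartic in \(u\). The plan is to transfer everything to Fourier variables and rewrite the two inner products as a single 4-linear integral on the hyperplane \(\xi_1+\xi_2+\xi_3+\xi_4=0\),
\[
\frac{1}{2}\frac{\diff}{\diff t}E^{(k)}(t) = \int_{\xi_1+\xi_2+\xi_3+\xi_4=0} M^{(k)}(\xi_1,\xi_2,\xi_3,\xi_4)\prod_{j=1}^4 \hat{u}(\xi_j)\, \diff\sigma,
\]
with a kernel \(M^{(k)}\) built from the multiplier \(m\), monomial weights \(\xi_j^{k}\), and one extra factor \(\xi\) from \(u\partial_x u\). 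Symmetrising \(M^{(k)}\) over its four arguments (permissible since the \(\hat u(\xi_j)\) are interchangeable as \(u\) is real) produces the splitting suggested in the introduction into a \emph{main} contribution \(F_0+G_0\), in which \(m\) appears evaluated at a single pair, and a \emph{commutator} contribution \(F_1+G_1\), which captures differences \(m(\cdot,\cdot)-m(\cdot',\cdot')\) of the multiplier between different pairings.

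For \(F_0\) and \(G_0\) the estimate is relatively direct. Using the pointwise size bounds on \(m\) provided by Lemma~\ref{lemma:m} and Corollary~\ref{cor:m}, one controls \(m\) by a negative power of the largest frequency, corrected by a low-frequency weight of order \(2j_*-1\); precisely here the hypothesis \(k\geq 2j_*-1\) enters, to absorb the singularity of \(m\) near the origin. A frequency split according to the largest of \(|\xi_1|,\dots,|\xi_4|\), followed by a Cauchy--Schwarz/Plancherel argument, places two \(\|u\|_{H^k}\) factors on the two highest-frequency components and two lower-order factors (bounded by \(\|u\|_{H^2}\|u\|_{H^3}\) through the Sobolev embedding \(H^2\hookrightarrow L^\infty\)) on the remaining two, giving the desired contribution to the right-hand side of the lemma.

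The real obstacle lies in \(F_1+G_1\). A naive estimate on this commutator loses a full derivative relative to what is available and therefore does not close at top order~\(k\). Following the strategy announced in the introduction, the plan is to invoke Proposition~\ref{prop: F1+G1 full}, whose proof proceeds through a global change of variables in Fourier space that rewrites the commutator in a form recovering \emph{two} orders of differentiation; a Schur-test kernel bound, combined with integration by parts and a further dyadic frequency decomposition, then delivers a bound of the form \(\|u\|_{H^2}\|u\|_{H^3}\|u\|_{H^k}^2\). Adding this to the estimate on \(F_0+G_0\) yields Lemma~\ref{lemma:energy estimates}.

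Throughout, the delicate point is the uniform interplay between the order \(\alpha \in [-1,1]\setminus\{0\}\) (which governs the behaviour of \(m\) at infinity, and may produce either decay or mild growth) and the vanishing of the resolvent denominator \(p(a)a+p(b)b-p(a+b)(a+b)\) near the origin (which produces a singularity of \(m\) whose order is dictated by \(j_*\)). Both the bounds in Lemma~\ref{lemma:m}/Corollary~\ref{cor:m} and the commutator reduction in Proposition~\ref{prop: F1+G1 full} must be written so as to accommodate both regimes simultaneously, and this is where the technical heart of the argument sits.
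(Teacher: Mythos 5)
Your overall architecture (quartic identity, Fourier-side quadrilinear integrals, a commutator for the hard part) points in the right direction, but your treatment of the highest-order terms \(F_0\) and \(G_0\) contains a genuine gap that the rest of the argument cannot repair. Writing \(Q=\partial_x^{-1}B\), one has \(F_0=\big(Q(u,\partial_x^{k+1}u),\partial_x^k(-u\partial_xu)\big)_2\) and \(G_0=\big(Q(u,\partial_x^k(u\partial_xu)),\partial_x^{k+1}u\big)_2\): each of these places \(k+1\) derivatives on \emph{two} of the four factors, and the symbol \(n\) of \(Q\) is at best bounded at high frequencies (by Corollary~\ref{cor:m}, \(m\) itself may even grow like a first power of a frequency when \(\alpha<1\), so your claim that \(m\) is controlled by a negative power of the largest frequency is not available). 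Consequently no pointwise bound on the multiplier combined with a frequency split and Cauchy--Schwarz can produce better than \(\|u\|_{H^{k+1}}^2\) on the top-order factors, and the estimate does not close in \(H^k\). The paper does not estimate \(F_0\) and \(G_0\) at all: it shows they cancel \emph{exactly}, \(F_0+G_0=0\) as in \eqref{27}, as a consequence of the algebraic symmetry \(n(\xi-\eta,\eta)=\overline{n(\eta-\xi,\xi)}\) of the normal-form symbol \eqref{25}. Identifying this cancellation is the structural heart of the modified-energy method at this stage, and it is missing from your proposal.

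A secondary issue: reducing \(F_1+G_1\) to the commutator of Proposition~\ref{prop: F1+G1 full} is not a matter of simply invoking that proposition. Each of \(F_1\) and \(G_1\) still carries one extra derivative, and the paper spends a sequence of reductions (Lemmas~\ref{lemma: A_1 estimates}, \ref{lemma: F10 sim G10}, \ref{lemma: A_2 estimates}, \ref{prop:tilde FG} and Proposition~\ref{cor:thin sets}) decomposing frequency space into the regions \(\mathcal{A}_1\), \(\mathcal{A}_2\), \(\mathcal{B}\), using the triangle inequality \eqref{pro:I} to shift derivatives onto the difference frequencies, proving \(F_{1,0}=G_{1,0}+\bigO(\|u\|_{H^2}\|u\|_{H^3}\|u\|_{H^k}^2)\) by changes of variables, and only then isolating, on \(\mathcal{A}_{2,+}^c\cap\mathcal{B}\), the commutator \(m(\xi-\eta,\eta)/\xi-m(\xi-\eta,\sigma)/(\xi-\eta+\sigma)\) whose second-order Taylor expansion yields the two-derivative gain. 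The easier remainder pieces \(F_j,G_j\) for \(2\le j\le k\) also need their own short arguments (e.g.\ the antisymmetry trick that tames \(G_k\)). You name the right final proposition but skip the reductions that make it applicable.
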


We will first  show how to prove Theorem \ref{main theorem} by using Lemma \ref{lemma:equivalent norm} and Lemma \ref{lemma:energy estimates} and postpone their proofs to Section \ref{equivalent norm} and Section \ref{energy estimates} respectively. 

\begin{proof}[Proof of Theorem~\ref{main theorem}] In view of Lemma \ref{lemma:equivalent norm}, summing over \(k\) from \(2j_*-1\) to \(N\), one has
	\begin{align*}
	\sum_{k=2j_*-1}^NE^{(k)}(t)\lesssim \sum_{k=2j_*-1}^NE^{(k)}(0)+\int_0^t\|u(s,\cdot)\|_{H^N}^4\,\diff s,
	\end{align*}
	which  in turn yields 
	\begin{equation}\label{2}
	\sum_{k=2j_*-1}^NE^{(k)}(t)+\|u\|_{L^2}^2\lesssim \sum_{k=2j_*-1}^NE^{(k)}(0)+\|u_0\|_{L^2}^2
	+\int_0^t\|u(s,\cdot)\|_{H^N}^4\,\diff  s.
	\end{equation}
	Here,  we have used the \(L^2\)-conservation of solutions to \eqref{eq:Whitham type}.	
	According to Lemma~\ref{lemma:energy estimates}, we on the other hand have
	\begin{align}\label{3}
	\sum_{k=2j_*-1}^NE^{(k)}(t)+\|u\|_{L^2}^2
	\eqsim \frac{1}{2}\|u\|_{H^N}^2
	\end{align}
	for all \(t \geq 0\) and all \(\|u\|_{H^N} < \varepsilon\) sufficiently small. We conclude from \eqref{2} and~\eqref{3} that 
	\begin{align*}
	\|u\|_{H^N}^2\lesssim \|u_0\|_{H^N}^2+\int_0^t\|u(s,\cdot)\|_{H^N}^4\,\diff s,
	\end{align*}
	which finishes the proof by applying Gr\"onwall's inequality or an analogous continuity argument. 
\end{proof}

\section{The bound of the multiplier \(m\)}\label{sec:m}
\noindent In this section we study the singularities and growth of the multiplier \(m\) at `low' and `high' frequencies, respectively.

\begin{lemma}\label{lemma:m}
Let 
\(
\varphi(a,b) = p(a)a + p(b)b - p(a+b)(a+b).
\)
Then
\[
\varphi(a,b) = \varphi(b,a), \quad \varphi(-a,-b) = -\varphi(a,b), \quad \varphi(a,b) = \varphi(-(a+b),b),
\]
and
\[
|\varphi(a, b)| \eqsim \frac{|a b (a+b)|}{r^2} \, \min(r^{2j_*}, 1 + r^{\alpha}),
\]
where \(r=  \sqrt{a^2 + b^2}\).
In particular, \(a = 0\), \(b= 0\) and \(a = -b\) are the only zeros of \(\varphi\).
\end{lemma}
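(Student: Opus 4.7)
The three symmetries are direct consequences of the evenness of $p$: using $p(-\xi)(-\xi) = -p(\xi)\xi$, one immediately checks $\varphi(b,a) = \varphi(a,b)$, $\varphi(-a,-b) = -\varphi(a,b)$, and $\varphi(-(a+b),b) = -p(a+b)(a+b) + p(b)b + p(a)a = \varphi(a,b)$.

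For the quantitative estimate, set $q(\xi) = p(\xi)\xi$, which is odd, $C^2$, and vanishes at $\xi = 0$, so that $\varphi(a,b) = q(a) + q(b) - q(a+b)$. Since $q(0) = 0$, the identity
\[
\varphi(a,b) = -\int_0^a\int_0^b q''(s+t)\,ds\,dt
\]
is the basis of the whole analysis. Using the symmetries, I reduce to $a, b > 0$ (the configuration where $a+b$ is the largest of $\{|a|,|b|,|a+b|\}$); then $|ab(a+b)| = ab(a+b)$ and $a+b \eqsim r$.

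In the low-frequency regime ($r \lesssim 1$), assumption (A3) gives $q''(\xi) = C_0\xi^{2j_*-1} + O(|\xi|^{2j_*})$ near $\xi=0$ with $C_0 = 2j_*(2j_*+1)\tilde p_{2j_*}(0) \neq 0$. The leading term integrates to $\tilde p_{2j_*}(0)\bigl[a^{2j_*+1} + b^{2j_*+1} - (a+b)^{2j_*+1}\bigr]$, and for $a, b > 0$
\[
(a+b)^{2j_*+1} - a^{2j_*+1} - b^{2j_*+1} = \sum_{k=1}^{2j_*}\binom{2j_*+1}{k}a^k b^{2j_*+1-k} \eqsim ab(a+b)^{2j_*-1} \eqsim ab(a+b)\,r^{2j_*-2}
\]
by termwise comparison with the binomial expansion of $ab(a+b)^{2j_*-1}$. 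The remainder from the Lipschitz correction of $\tilde p_{2j_*}$ contributes at most $\lesssim ab(a+b)^{2j_*}$, which is dominated by the main term by a factor of $r$ and hence absorbed for $r$ small.

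In the high-frequency regime ($r \gtrsim 1$), assumption (A2) gives $|q''(\xi)| \eqsim |\xi|^{\alpha-1}$ for $|\xi| \gg 1$. When $a \eqsim b \eqsim r$, the double integral of $q''$ yields the matching bound directly; when one of $a, b$ is much smaller than the other, a Taylor expansion around the larger variable isolates the dominant contribution, again matching the form $\frac{|ab(a+b)|}{r^2}(1+r^{\alpha})$. The transition regime $r \eqsim 1$ is handled by continuity and compactness, matching the extreme regimes. The main obstacle is securing the sharp lower bound at high frequencies: (A2) controls magnitudes only, so the possible cancellation in $q'' = 2p' + p''\xi$ must be ruled out via the sharp asymptotic behaviour that (A2) imposes. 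The zero-set statement then follows from the quantitative bound: $\varphi$ evidently vanishes on $\{ab(a+b)=0\}$, and the estimate excludes other zeros.
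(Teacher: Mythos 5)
Your symmetry argument, the reduction to \(a,b>0\), and the second-difference identity \(\varphi(a,b)=-\int_0^a\int_0^b q''(s+t)\,\diff s\,\diff t\) with \(q(\xi)=p(\xi)\xi\) are all fine, but the proof has two genuine gaps, both stemming from the fact that the hypotheses do not give you pointwise control of \(q''\) where you need it. First, near the origin: (A3) is a statement about the \emph{values} of \(p\), with \(\tilde p_{2j_*}\) merely locally Lipschitz, and you cannot differentiate an asymptotic expansion. The claim \(q''(\xi)=C_0\xi^{2j_*-1}+O(|\xi|^{2j_*})\) does not follow from (A1)+(A3); for \(j_*\geq 2\) there are \(C^2\) symbols satisfying (A3) for which \(q''\) is not even comparable to \(\xi^{2j_*-1}\) pointwise (e.g.\ \(p(\xi)=p(0)+\xi^4\bigl(1+A\xi^2\sin(1/|\xi|)\bigr)\) with \(A\) large: \(\tilde p_4\) is Lipschitz and \(p\in C^2\), yet \(q''(\xi)=\xi^3\bigl[20-A\sin(1/\xi)\bigr]+O(\xi^4)\) changes sign in every neighbourhood of the origin). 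The paper avoids this entirely by Taylor-expanding the three values \(p(a)a\), \(p(b)b\), \(p(a+b)(a+b)\) directly from (A3) and extracting the factor \(a\) from the resulting binomial sum. Second, at high frequencies the lower bound is the whole point, and you have only flagged, not closed, the possible cancellation in \(q''=2p'+\xi p''\): (A2) controls magnitudes only, and for \(\alpha\in(0,1)\) the typical situation is \(p'>0\), \(p''<0\) (increasing, concave), so the two terms genuinely have opposite signs. The paper's resolution does not pass through \(q''\) at all: it writes \(\varphi(a,b)=a\bigl[p(a)-p(b)-(a+b)\int_0^1 p'(sa+b)\,\diff s\bigr]\) and observes that strict monotonicity (A1) forces the two bracketed terms to have the \emph{same} sign, so their magnitudes add. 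That same-sign observation is the missing idea.

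A smaller but real logical issue: you propose to deduce the zero-set statement from the quantitative bound, yet your treatment of the transition regime \(r\eqsim 1\) by ``continuity and compactness'' already presupposes that \(\varphi/(ab(a+b))\) is bounded away from zero on that compact set --- which is exactly the statement that \(a=0\), \(b=0\), \(a=-b\) are the only zeros and that they are simple. The paper establishes this first (no further zeros by monotonicity on a fundamental domain \(a=\lambda b\), \(-\tfrac12<\lambda<1\); simplicity from \(\partial_a\varphi(a,b)|_{a=0}=-p'(b)b-[p(b)-p(0)]\neq 0\)), and only then invokes the factorization \(\varphi=ab(a+b)\tilde\varphi\) for intermediate frequencies. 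You need to prove the zero-set claim independently before the compactness step, or the argument is circular.
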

Note that \(|a b (a+b)|r^{-2}\) is proportional to the smallest of \(|a|\), \(|b|\) and \(|a+b|\), which is always present in \(\varphi\). Apart from this factor, it is \(r^{2j_*}\) that determines \(\varphi\) locally, and \(1+ r^{\alpha}\) in the far field.

\begin{proof}[Proof of Lemma \ref{lemma:m}]
As \(p\) is even, it is immediate to verify that \(a = 0\), \(b = 0\) and \(a = -b\) are zeros of \(\varphi\). Similarly, the symmetries \((a,b) \leftrightarrow (b,a)\) and \((-a,b) \leftrightarrow (a-b,b)\), as well as the anti-symmetry \((-a,-b) \leftrightarrow (a,b)\), follow directly from the definition. 

 \begin{figure}
\includegraphics[width=0.5\textwidth]{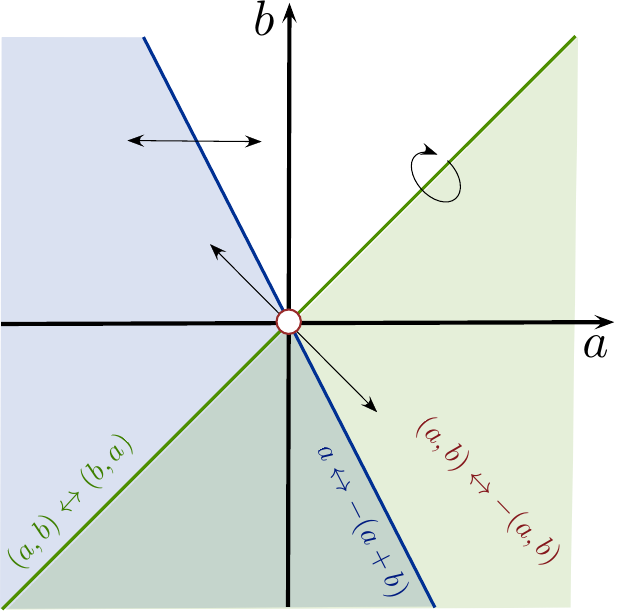}
\caption{The symmetries of the function \(\varphi\).}
\label{fig:phi_diagram}
\end{figure}

To see that there are no other zeros, note first that, by the three above symmetries, it is enough to investigate the region
\[
a = \lambda b, \qquad -\frac{1}{2} < \lambda < 1, \quad \lambda \neq 0,
\] 
in the upper half-plane where \(b > 0\), see Figure~\ref{fig:phi_diagram}. There, we have
\[
\varphi(a,b) = b \underbrace{\left[p(\lambda b) \lambda  + p(b)  - p((1+\lambda)b) (1+\lambda) \right]}_{(\ast)}
\]
Recall that \(p\) is even and strictly monotone on the half-line. In case it is increasing, we have
\[
(\ast) \lessgtr (1+\lambda) \left[ p(b) - p((1+\lambda)b) \right] \lessgtr 0,
\]
where the inequalities vary with the sign of \(\lambda \in (-\frac{1}{2},0) \cup (0,1)\). Again, if \(p\) is instead decreasing, the signs reverse. Hence, \(p\) has no additional zeros, and if we take a \(\frac{\diff}{\diff a}\)-derivative for any fixed \(b > 0\) at the zero \(a = 0\), we get
\[
\frac{\diff}{\diff a} \varphi(a,b)|_{a = 0} =  - p'(b) b - [p(b) - p(0)]  \neq 0,
\] 
because of the strict monotonicity of \(p\). Hence, by the three symmetries, all zeros of \(\varphi\) outside the origin are simple, and \(\varphi\) may be expressed as
\begin{equation}\label{eq:basic zeros}
\varphi(a,b) = a b (a+b) \tilde \varphi(a,b),
\end{equation}
where \(\tilde \varphi\) is continuous and non-vanishing  for \((a,b) \neq (0,0)\). 

We now consider three cases. Let \(r= \sqrt{a^2 + b^2}\), and consider separately the cases 
\begin{itemize}
\item[(i)] \(0 < r \ll 1\),
\item[(ii)] \(|a| \lesssim 1\) with \(r \gg 1\),
\item[(iii)] \(|r| \gg 1\) with \(|a|, |b|, |a+b| \gg 1\).
\end{itemize}

Note that the intermediate cases are already covered by \eqref{eq:basic zeros}, and that (ii) covers the case when either \(b\) or \(a+b\) is substituted for  \(a\) in view of the symmetries of \(\varphi\). Exactly one, or all, of \(a\), \(b\) and \(a+b\) can be small at the same time.

In the case of (i) all terms are small. We assume by symmetry that \(a\) is the smallest in the triad \(a,b, a+b\), and perform the calculation for the variables \(a\) and \(r\), noting that \(r\) is the biggest of them. From the local Lipschitz continuity of the Taylor coefficient \(p_{2j_*}\) we have an expansion
\begin{align*}
\varphi(a,b) &= p(a) a + p(b) b - p(a+b)(a+b)\\ 
&= a[ p(0) + a^{2j_*} p_{2j_*}(a)] + b[ p(0) + b^{2j_*} p_{2j_*}(b)]\\
&\quad - (a+b) [p(0) + (a+b)^{2j_*} p_{2j_*}(a+b)]\\ 
&= a^{2j_*+1} \left(p_{2j_*}(0) + o_r(1)\right) + b^{2j_*+1} p_{2j_*}(b)\\ 
&\quad -  \sum_{j=0}^{2j_* + 1} \binom{2j_*+1}{j} a^j b^{2j_*+1 -j} (p_{2j_*}(b) + a \, o_r(1))\\ 
&= p_{2j_*}(0)  \bigg( -  \sum_{j=1}^{2j_*} \binom{2j_*+1}{j} a^{j-1} b^{2j_*+1 -j}  + o(r^{2j_*}) \bigg) a  \\
&\eqsim -  p_{2j_*}(0)  a r^{2j_*}.
\end{align*}
It is possible to explicitly extract also \(b\) and \(a+b\) in the above expression, but it is not necessary, as we know that the orders of of \(a\), \(b\) and \(a+b\) are identical. Thus, we have
\[
|\varphi(a,b)| \eqsim |ab(a+b)| r^{2j_* -2}, \qquad r \ll 1,
\]
for small frequencies.

In the case of (ii), we instead expand \(p(\cdot + a)\) in \(a\), yielding
\begin{equation}\label{eq:expand in a}
\begin{aligned}
\varphi(a,b) &= p(a) a + p(b) b - p(a+b)(a+b)\\ 
&= p(a) a + p(b)b - \left[p(b) +  a \int_0^1 p'(sa + b) \,\diff s \right] (a+b)\\
&=  a \left[ p(a) - p(b) - (a+b) \int_0^1 p'(sa + b) \,\diff s \right].
\end{aligned}
\end{equation}
As \(p\) is monotone and \(a\) is assumed bounded and \(b\) large, the terms \(p(a)-p(b)\) and \(- (a+b) \int_0^1 p'(sa + b) \,\diff s\) are of the same sign. By monotonicity and the far-field assumption \eqref{assumption:A2}, the distance between \(p\) at the origin and at infinity is bounded and nonzero for \(\alpha < 0\), and infinite for \(\alpha >0\). More precisely,
\[
|p(a)-p(b)| \eqsim 1 + r^{\alpha},
\]
while
\[
\bigg|- (a+b) \int_0^1 p'(sa + b) \,\diff s \bigg| \eqsim r^{\alpha}.
\]
Therefore,
\[ 
|\varphi(a,b)|\eqsim |a| (1 + r^{\alpha}), 
\]
when \(|b| \eqsim r \gg 1\) and \(|a| \lesssim 1\). 

If, as in the case of (iii), none of the variables \(a\), \(b\) and \(a+b\) are bounded, the difference \(p(a) - p(b)\) need not be bounded away from zero. One can see from \eqref{eq:expand in a} that the expression \(|a|(1 + r^\alpha)\) becomes
\[
|\varphi(a,b)|\eqsim \min(|a|,|b|, |a+b|) \max(|a|^\alpha,|b|^\alpha,|a+b|^\alpha),
\]
where we note that the max is determined by different variables depending on the sign of \(\alpha\).
By combining (ii) and (iii) we thus get the global behaviour
\[
|\varphi(a,b)| \eqsim \frac{|ab(a+b)|}{r^2}  \left( (1+|a|)^\alpha + (1+|b|)^\alpha \right), \qquad r \gtrsim 1,
\]
for `large' frequencies (either \(a\), \(b\) or \(a+b\) could still be small).
\end{proof}

The following corollary is immediate from Lemma \ref{lemma:m}. It describes the low-frequency singularities and high-frequency derivatives of the symbol \(m\). 
\begin{corollary}\label{cor:m}
The symbol \(m\) satisfies
\[
|m(\xi-\eta,\eta)| \eqsim \frac{1}{|\eta (\xi - \eta)|} \left[\frac{1}{r^{2j_* - 2}} + \frac{r^2}{(1+ |\eta|)^\alpha + (1+ |\xi - \eta|)^\alpha} \right], 
\] 
where \(r^2\) could be replaced by any two-term sum of \(\xi^2\), \(\eta^2\) and \((\xi - \eta)^2\). In particular, 
\begin{align*}
|m(\xi-\eta,\eta)| &\lesssim \underbrace{\frac{1}{|\eta (\xi - \eta) \xi^{2j_* - 2}|}}_{m_1(\xi-\eta,\eta)}  + \underbrace{1 + |\xi|\left(1 + \frac{1}{|\eta|} + \frac{1}{|\xi -\eta|}\right)}_{m_2(\xi-\eta, \eta)}.
\end{align*} 
\end{corollary}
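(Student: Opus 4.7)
The plan is to substitute the bound from Lemma~\ref{lemma:m} into the definition $m(\xi-\eta,\eta) = \xi/(2\varphi(\xi-\eta,\eta))$ and then massage algebraically. With $a = \xi-\eta$, $b = \eta$ (so $a+b = \xi$ and $r^2 = \eta^2 + (\xi-\eta)^2$), Lemma~\ref{lemma:m}---more precisely, the sharper global form
\[
|\varphi(a,b)| \eqsim \frac{|ab(a+b)|}{r^2}\min\!\bigl(r^{2j_*},\,(1+|a|)^\alpha+(1+|b|)^\alpha\bigr)
\]
present inside its proof---yields, after cancelling $|\xi|$,
\[
|m(\xi-\eta,\eta)| \eqsim \frac{r^2}{|\eta(\xi-\eta)|\,\min\!\bigl(r^{2j_*},\,(1+|\eta|)^\alpha+(1+|\xi-\eta|)^\alpha\bigr)}.
\]
Using the elementary fact $1/\min(A,B) \eqsim 1/A + 1/B$ for positive $A, B$, I split the denominator into two terms, which gives the first assertion. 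The remark that $r^2$ may be replaced by any pairwise sum of $\xi^2,\eta^2,(\xi-\eta)^2$ is immediate from the triangle inequality $|\xi| \leq |\eta|+|\xi-\eta| \leq \sqrt{2}\,r$, forcing at least two of $\xi^2,\eta^2,(\xi-\eta)^2$ to be comparable to the largest, so all three such sums are equivalent to $r^2$.

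For the pointwise bound $|m| \lesssim m_1 + m_2$, I would estimate the two bracketed terms separately. The first is controlled by $m_1$ using $r \gtrsim |\xi|$ and $j_* \geq 1$, which gives $1/r^{2j_*-2} \lesssim 1/|\xi|^{2j_*-2}$; division by $|\eta(\xi-\eta)|$ produces $m_1$. For the second term, rewrite
\[
\frac{r^2}{|\eta(\xi-\eta)|} = \frac{|\eta|}{|\xi-\eta|} + \frac{|\xi-\eta|}{|\eta|},
\]
and split into cases by the relative size of $|\eta|$ and $|\xi-\eta|$. The triangle inequality $|\eta| \leq |\xi|+|\xi-\eta|$ gives $|\eta|/|\xi-\eta| \lesssim 1 + |\xi|/|\xi-\eta|$, which is absorbed by $m_2$; the symmetric estimate handles $|\xi-\eta|/|\eta|$. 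The denominator $(1+|\eta|)^\alpha + (1+|\xi-\eta|)^\alpha$ is $\gtrsim 1$ for $\alpha \geq 0$; for $\alpha < 0$ its reciprocal grows at worst like a positive power of $\max(|\eta|,|\xi-\eta|) \leq |\xi| + \min(|\eta|,|\xi-\eta|)$, and the threshold $|\alpha| \leq 1$ forces the extra factor to be no larger than $1 + |\xi|$, already present in $m_2$.

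The main technical obstacle is the last case analysis, specifically the regime $\alpha < 0$ with both $|\eta|, |\xi-\eta|$ large, where the $\alpha$-denominator decays and could naively inflate the multiplier. Controlling this is precisely where the endpoint $\alpha = -1$ becomes relevant: the exponent bound $|\alpha| \leq 1$ ensures that the worst-case inflation is a linear factor $|\xi|$, making $m_2$ exactly the right envelope.
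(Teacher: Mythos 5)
The paper offers no separate argument for this corollary (it is declared ``immediate from Lemma~\ref{lemma:m}''), and your route --- substitute the lemma into \(m=\xi/(2\varphi)\), cancel \(|\xi|\), use \(1/\min(A,B)\eqsim 1/A+1/B\), then bound the two resulting terms by \(m_1\) and \(m_2\) --- is exactly the intended one. The derivation of the first display, the equivalence of the pairwise sums, the \(m_1\) bound via \(r\gtrsim|\xi|\), and the \(\alpha\geq 0\) part of the \(m_2\) bound are all correct.

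The gap is in your final case, \(\alpha<0\) with both \(|\eta|,|\xi-\eta|\) large. For \(\alpha<0\) the reciprocal of \((1+|\eta|)^\alpha+(1+|\xi-\eta|)^\alpha\) is comparable to \((1+\min(|\eta|,|\xi-\eta|))^{|\alpha|}\), and your chain ``\(\max(|\eta|,|\xi-\eta|)\leq|\xi|+\min(|\eta|,|\xi-\eta|)\), hence the extra factor is at most \(1+|\xi|\)'' is a non sequitur: \(1+|\xi|+\min\) is not \(\lesssim 1+|\xi|\) when the minimum is large. Concretely, take \(\alpha=-1\), \(|\xi|\lesssim 1\) and \(\eta\eqsim-(\xi-\eta)\eqsim M\gg1\): then the bracketed second term of the first display is \(\eqsim r^2\cdot M^{|\alpha|}/M^2\eqsim M\), while \(m_1+m_2\eqsim 1\). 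So not only does your estimate fail there --- the implication from the first display to the bound \(|m|\lesssim m_1+m_2\) is itself false in that regime, because the first display (with the denominator \((1+|\eta|)^\alpha+(1+|\xi-\eta|)^\alpha\) in place of the lemma's \(1+r^\alpha\)) overestimates \(|m|\) when \(\alpha<0\) and the small member of the triad is \(\xi=(\xi-\eta)+\eta\). The true bound in that regime must be read off from Lemma~\ref{lemma:m} directly: by the symmetry \(\varphi(a,b)=\varphi(-(a+b),b)\) one is in case (ii) with \(-\xi\) as the bounded variable, so \(|\varphi|\eqsim|\xi|(1+r^\alpha)\gtrsim|\xi|\) and hence \(|m|\lesssim 1\leq m_2\). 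You should therefore prove the second display from the lemma's statement \(|\varphi|\eqsim |ab(a+b)|r^{-2}\min(r^{2j_*},1+r^\alpha)\) (for which \(1+r^\alpha\gtrsim 1\) disposes of all \(\alpha<0\) cases at once), rather than from the corollary's own first display.
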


Note that although Corollary~\ref{cor:m} gives a precise equivalence for \(m\), the estimate that follows from it is in fact enough to treat all the cases \(\alpha \in (-1,0) \cup (0,1)\) in a uniform way. The singularities in the symbol still provide some difficulties, and Sections~\ref{equivalent norm} and~\ref{energy estimates} are mainly devoted to treat these in the energy estimates.

\section{The proof of Lemma \ref{lemma:equivalent norm}}\label{equivalent norm}	
\noindent  This section is devoted to the proof of the equivalent energy norm defined in Lemma~\ref{lemma:equivalent norm}. Let \(k\geq 2j_*-1\). We will be done if we can show that 
\begin{equation}\label{eq:cubic estimate}
\big(\partial_x^ku, \partial_x^kB(u,u)\big)_2 \lesssim \|u\|_{H^2}\|u\|_{H^{k}}^2.
\end{equation}
Using the symmetry of \(B\) one may write  
	\begin{align*}
		&\big(\partial_x^ku, \partial_x^kB(u,u)\big)_2\\
		&=2\int_{\mathbb{R}^2}m(\xi-\eta,\eta)\hat{u}(\xi-\eta) (\I \eta)^k \hat{u}(\eta) \overline{(\I \xi)^k \hat{u}(\xi)}\,\diff\eta\, \diff \xi\\
		&\quad+\sum_{j=j_*}^{k-1} c_{k,j} \int_{\mathbb{R}^2}m(\xi-\eta,\eta)(\I(\xi-\eta))^{j}\hat{u}(\xi-\eta) (\I \eta)^{k-j} \hat{u}(\eta) \overline{(\I \xi)^k \hat{u}(\xi)}\,\diff\eta\, \diff \xi\\
		&=\colon 2A_0+\sum_{j=j_*}^{k-1}c_{k,j}A_j,
	\end{align*}
where \(c_{k,j}\) are binomial coefficients. The term \(A_0\) possesses a singularity at low frequencies and an extra derivative at high frequencies, and we handle it first.  We will use integration by parts to eliminate the worst part of \(A_0\) and thus reduce it.  Note that on the Fourier side, integration by parts corresponds to formula \(-\I\xi = -(\I(\xi - \eta) + \I\eta)\), hence
\begin{equation}\label{18}
\begin{aligned}
A_0 &=\underbrace{-\int_{\mathbb{R}^2}m(\xi-\eta,\eta) \I (\xi - \eta) \hat{u}(\xi - \eta) (\I \eta)^k \hat{u}(\eta) \overline{(\I \xi)^{k-1} \hat{u}(\xi)}\,\diff\eta\, \diff \xi}_{A_{0}^1\colon \text{good part}}\\
	&\quad\underbrace{-\int_{\mathbb{R}^2}m(\xi-\eta,\eta) \hat{u}(\xi - \eta) (\I \eta)^{k+1} \hat{u}(\eta) \overline{(\I \xi)^{k-1} \hat{u}(\xi)}\,\diff\eta\, \diff \xi}_{A_{0}^2\colon \text{bad part}},
\end{aligned}
\end{equation}
in which we have utilised the fact  \(\overline{\hat u(\xi)} = \hat u(-\xi)\) since \(u\) is real  and \(m\) is invariant under the map \((\xi, \eta) \mapsto -(\xi, \eta)\).
We next perform the change of variables \(\xi \leftrightarrow \eta\) on \(A_{0}^2\) and note that the solution \(u\) is real, to find that  
\begin{align*}
	A_{0}^2
	=-\int_{\mathbb{R}^2}m(\eta-\xi,\xi)\hat{u}(\xi-\eta) (\I \eta)^{k-1} \hat u(\eta) \overline{(\I \xi)^{k+1} \hat u(\xi)} \,\diff\eta \, \diff \xi.
\end{align*} 
Therefore  
\begin{equation}\label{19}
\begin{aligned}
	A_0+A_{0}^2
	&=\int_{\mathbb{R}^2}\eta^{-1}\big[m(\xi-\eta,\eta)\eta+m(\eta-\xi,\xi)\xi\big]\\
	&\quad\times \hat{u}(\xi-\eta)(\I \eta)^k \hat{u}(\eta) \overline{(\I \xi)^k \hat{u}(\xi)}\,\diff\eta\, \diff \xi =0,
\end{aligned}
\end{equation}
where we have applied the equality  
\begin{align*}
m(\xi-\eta,\eta)\eta+m(\eta-\xi,\xi)\xi=0.
\end{align*}
Inserting \eqref{19} to \eqref{18}, we finally may express 
\begin{align}\label{20}
A_0=\frac{1}{2}A_{0}^1.
\end{align}

Hence, it is sufficient to estimate the terms \(A_0^1\) and \(A_j\), \(j=1,\cdots,k-1\). In view of Corollary \ref{cor:m} the multiplier \(m\) is controlled by \(m_1+m_2\). On the one hand,  the factor \( (\xi - \eta)  \eta^k \xi^{k-1}\) appearing in \(A_0^1\) and \(A_j\)  eliminates the low-frequency singularities of \(m_1(\xi-\eta,\eta)\) because of \(k \geq 2j_*-1\); on the other hand, the multiplier \(m_1\) does not contain any derivatives (high-frequency). We let \(A_0^1(m_1)\) denote the part controlled by \(m_1\) of the integral, and similarly for other integrals to come.
Consequently we may  estimate  
\begin{equation}\label{21}
\begin{aligned}
A_0^1(m_1)&\lesssim\bigr\||D|^{-1} \partial_xu\bigr\|_{L^2} \bigr\| |D|^{-1} \partial_x^ku \bigr\|_{H^1}
\bigr\||D|^{-(2j_*-2)}\partial_x^{k-1}u\bigr\|_{L^2}\\
&\lesssim\|u\|_{L^2}\|u\|_{H^{k}}\|u\|_{H^{k-2j_*+1}}\leq\|u\|_{L^2}\|u\|_{H^{k}}^2,
\end{aligned}
\end{equation}
and
\begin{equation}\label{22}
\begin{aligned}
A_j(m_1)&\lesssim\bigr\||D|^{-1}\partial_x^j u\bigr\|_{L^2}\bigr\||D|^{-1}  \partial_x^{k-j} u\bigr\|_{H^1} \bigr\||D|^{-(2j_*-2)}\partial_x^{k} u\bigr\|_{L^2}\\
&\lesssim\|u\|_{H^{j-1}}\|u\|_{H^{k-j}}\|u\|_{H^{k-2j_*+2}}\leq \|u\|_{H^{k}}^3.
\end{aligned}
\end{equation}

For the parts \(A_0^1(m_2)\) and \(A_j(m_2)\), there is also no singularity at low frequencies in the total symbol, thus we need only to focus on the high-frequency derivatives. The estimate on \(A_0^1(m_2)\) is straightforward: 
\begin{equation}\label{23}
\begin{aligned}
A_0^1(m_2)&\lesssim\bigr\| \partial_x u\bigr\|_{H^1} \bigr\| \partial_x^ku \bigr\|_{L^2}
\bigr\|\partial_x^{k-1}u\bigr\|_{L^2}\\
&\quad+\bigr\||D|^{-1}\partial_xu\bigr\|_{H^1}\bigr\|\partial_x^ku\bigr\|_{L^2} 	\bigr\||D|\partial_x^{k-1}u\bigr\|_{L^2}\\
&\quad+\bigr\| \partial_xu\bigr\|_{L^2}\bigr\||D|^{-1}\partial_x^ku\bigr\|_{H^1} \bigr\||D|\partial_x^{k-1}u\bigr\|_{L^2}\\
&\lesssim\|u\|_{H^2}\|u\|_{H^{k}}^2.
\end{aligned}
\end{equation}
To handle \(A_j(m_2)\), we shall distribute the derivative in the total symbol to its correct position. Before going ahead, we make a general rehearsal for convenience hereafter.    
\begin{lemma}\label{le:rehearsal}   
\begin{align*}
T(\hat{f},\hat{g})(a)\colon=\int_{\R}m_2(a-b,b)\hat{f}(a-b)\hat{g}(b)\, \diff b.
\end{align*}
Then 
\begin{equation*}
\begin{aligned}
\|T(\hat{f},\hat{g})\|_{L^2}&\lesssim \|(1+|D|)f\|_{L^2}\|(1+|D|^{-1})g\|_{H^1}\\
&\quad+\|(1+|D|^{-1})f\|_{H^1}\|(1+|D|)g\|_{L^2}.
\end{aligned}
\end{equation*}
\end{lemma}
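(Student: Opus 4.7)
The plan is to decompose the symbol additively as
\[
m_2(a-b,b) = 1 + |a| + \frac{|a|}{|b|} + \frac{|a|}{|a-b|},
\]
producing $T = T_1 + T_2 + T_3 + T_4$. Each piece is then identified on the Fourier side with a classical pointwise product: via the convolution formula one has $T_1(\hat f,\hat g) = \widehat{fg}$, $T_2(\hat f,\hat g)(a) = |a|\,\widehat{fg}(a)$, while $T_3(\hat f,\hat g)(a) = |a|\,\widehat{f\cdot |D|^{-1}g}(a)$ and $T_4(\hat f,\hat g)(a) = |a|\,\widehat{|D|^{-1}f\cdot g}(a)$. By Plancherel, the lemma reduces to four $L^2$-estimates for ordinary products in physical space.

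For each of these I would apply H\"older's inequality, Leibniz's rule, and the one-dimensional Sobolev embedding $H^1(\R)\hookrightarrow L^\infty(\R)$. The piece $T_1$ is directly controlled by $\|f\|_{H^1}\|g\|_{L^2}$ (or the symmetric bound), and $T_2$ by $\|f\|_{H^1}\|g\|_{H^1}$ after distributing the derivative across the product. For $T_3$ the singular factor $|b|^{-1}$ is absorbed into $|D|^{-1}g$, and then
\[
\|T_3\|_{L^2} \lesssim \|\partial_x f\|_{L^2}\,\||D|^{-1}g\|_{L^\infty} + \|f\|_{L^\infty}\,\|g\|_{L^2} \lesssim \|f\|_{H^1}\,\||D|^{-1}g\|_{H^1},
\]
using the embedding to control $\||D|^{-1}g\|_{L^\infty}$ and the identity $\|g\|_{L^2} = \||D|\cdot|D|^{-1}g\|_{L^2} \le \||D|^{-1}g\|_{H^1}$ for the second term. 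The estimate for $T_4$ is obtained symmetrically with the roles of $f$ and $g$ exchanged.

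Finally I would collect the four contributions under the two master norms via the elementary comparisons $\|f\|_{H^1}\eqsim \|(1+|D|)f\|_{L^2}\le \|(1+|D|^{-1})f\|_{H^1}$ and the analogous inequalities for $g$. The pieces $T_1$ and $T_3$ then fit into $\|(1+|D|)f\|_{L^2}\|(1+|D|^{-1})g\|_{H^1}$, while $T_2$ (split with the derivative placed on $g$) and $T_4$ fit into $\|(1+|D|^{-1})f\|_{H^1}\|(1+|D|)g\|_{L^2}$. The main obstacle here is not technical difficulty but bookkeeping: each low-frequency singularity $|b|^{-1}$ or $|a-b|^{-1}$ must be matched with a $|D|^{-1}$ acting on the correct argument, which is precisely why both master terms on the right-hand side must contain a factor of $(1+|D|^{-1})$. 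Everything else is a routine consequence of standard Sobolev product estimates in one space dimension.
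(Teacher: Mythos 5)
Your argument is correct, but it runs along a different track than the paper's. You pass back to physical space via the convolution theorem, identify each piece of $m_2(a-b,b)=1+|a|+\tfrac{|a|}{|b|}+\tfrac{|a|}{|a-b|}$ with $|D|^{j}$ applied to a pointwise product, and then use a Leibniz rule for $|D|$ together with the embedding $H^1(\R)\hookrightarrow L^\infty(\R)$. One small point deserves care: $|D|$ does not satisfy an exact Leibniz identity, so the step
\(\||D|(fh)\|_{L^2}\lesssim\||D|f\|_{L^2}\|h\|_{L^\infty}+\|f\|_{L^\infty}\||D|h\|_{L^2}\)
needs a justification — either quote the Kato--Ponce fractional Leibniz rule, or (simpler, and sufficient here since the order is exactly one) write $|D|=H\partial_x$ with $H$ the Hilbert transform, apply the genuine Leibniz rule to $\partial_x$, and use $L^2$-boundedness of $H$ and $\|\partial_x h\|_{L^2}=\||D|h\|_{L^2}$. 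With that remark your estimates for $T_1,\dots,T_4$ and the bookkeeping into the two master norms all check out. The paper instead never leaves the Fourier side: it dominates the symbol pointwise by the tensorized expression $(1+|a-b|)(1+|b|^{-1})+(1+|b|)(1+|a-b|^{-1})$ (which absorbs your four terms at once via $|a|\le|a-b|+|b|$), distributes these weights onto $\hat f(a-b)$ and $\hat g(b)$, and concludes with a single application of Minkowski's integral inequality, i.e.\ the $L^2\ast L^1$ convolution bound combined with $\|\hat g\|_{L^1}\lesssim\|g\|_{H^1}$. The two mechanisms are dual to one another ($\|\hat g\|_{L^1}\lesssim\|g\|_{H^1}$ is the Fourier-side avatar of your $H^1\hookrightarrow L^\infty$), but the paper's version avoids any product or commutator estimate and is a couple of lines shorter; yours makes the role of each low-frequency factor $|D|^{-1}$ more visible.
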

\begin{proof}
The desired result follows from 
\begin{equation*}
\begin{aligned}
m_2(a-b,b)
&= 1 + \frac{|a-b|}{|b|} +\frac{|b|}{|a-b|}  \\
&\lesssim (1+|a-b|)(1+\frac{1}{|b|})+(1+|b|)(1+\frac{1}{|a-b|}),
\end{aligned}
\end{equation*}
and
\begin{equation*}
\begin{aligned}
\quad\left\|\int \big|\hat{f}(a-b)\widehat{g}(b)\big|\, \diff b\right\|_{L_a^2}
\leq \int \left(\int \big|\hat{f}(a-b)\big|^2\, \diff a\right)^{1/2} \big|\widehat{g}(b)\big|\, \diff b
\lesssim \|f\|_{L^2}\|g\|_{H^1}.
\end{aligned}
\end{equation*}
\end{proof}

From Lemma \ref{le:rehearsal}, it follows that 
\begin{equation}\label{24}
\begin{aligned}
A_j(m_2)&\lesssim \bigr\|\partial_x^{k}u\bigr\|_{L^2}\bigg(\bigr\|(1+|D|)\partial_x^j u\bigr\|_{L^2} \bigr\|(1+|D|^{-1})  \partial_x^{k-j} u\bigr\|_{H^1}\\
&\quad +\bigr\|(1+|D|^{-1}) \partial_x^j u\bigr\|_{H^1}\bigr\|(1+|D|)\partial_x^{k-j} u\bigr\|_{L^2}\bigg) \\
&\lesssim \|u\|_{H^k}^3.
\end{aligned}
\end{equation}

We conclude  \eqref{eq:cubic estimate} from \eqref{21}-\eqref{24}, which proves Lemma~\ref{lemma:equivalent norm}.

\section{The proof of Lemma \ref{lemma:energy estimates}}\label{energy estimates}
\noindent We will first perform some  basic algebraic manipulations to reduce the evolution \eqref{eq:quartic form} of the modified energy \(E^{(k)}(t)\) 
to certain higher-order terms which posses extra derivatives at high frequencies. Pointwise monomial estimates like the ones in  Section~\ref{equivalent norm} however cannot be used directly to deal with the higher-order terms left (in fact, they are untrue in that formulation if one wants a bound in \(H^k(\R)\)). To tackle the problem, we therefore perform global transformations which include integration by parts and a series of change of variables in spectral space, which finally reduce the most difficult part of the higher-order terms to a commutator that has two orders of gain in the required  Fourier variables via its difference structure. The main difficulty is that, to find the correct commutator, we need to divide the Fourier space \(\R^3\) into different parts and then make several changes of variables in some symmetric domains far away from the low frequencies to avoid the singularities.

\subsection{Reduction of \(\frac{\diff}{\diff t} E^{(k)}(t),k\geq 2j_*-1\)} 
Note that there are two more derivatives on the right-hand side of \eqref{eq:quartic form} compared to the Sobolev norm in Lemma \ref{lemma:equivalent norm}.  We first introduce a new bilinear form to find a cancellation, which reduces the terms to those contain at most one  extra derivative.    
Let \(Q=\partial_x^{-1} B\) and denote by \(n\) the symbol of \(Q\)  so that
\begin{equation}\label{25}
\begin{aligned}
n(\xi-\eta,\eta) =\frac{-\I}{2\big[p(\xi-\eta)(\xi-\eta)+p(\eta)\eta-p(\xi)\xi\big]},
\end{aligned}
\end{equation}
and
\begin{align}\label{estimate:n}
|n(\xi-\eta,\eta)|
\lesssim\frac{1}{|\xi|^{2j_*-1}|\xi-\eta||\eta|}+(1+\frac{1}{|\xi|})(1+\frac{1}{|\xi-\eta|}
+\frac{1}{|\eta|}).
\end{align}

We  decompose \eqref{eq:quartic form} into its highest-order and remainder terms, as
\begin{equation*}
\begin{aligned}
&\big(\partial_x^kB(u,u), \partial_x^k(-u\partial_xu)\big)_2\\
&=\big(\partial_x^{k+1}Q(u,u), \partial_x^k(-u\partial_xu)\big)_2
=2\big(\partial_x^kQ(u,\partial_x u), \partial_x^k(-u\partial_xu)\big)_2\\
&=2\underbrace{\big(Q(u,\partial_x^{k+1}u), \partial_x^k(-u\partial_xu)\big)_2}_{F_0\colon\text{highest-order term}} +
2\sum_{j=1}^k c_{k,j} \underbrace{\big(Q(\partial_x^ju,\partial_x^{k+1-j}u),\partial_x^k(-u\partial_xu)\big)_2}_{F_j\colon\text{remainder terms}},
\end{aligned}
\end{equation*}
and, using integration by parts,
\begin{equation*}
\begin{aligned}
&2\big(\partial_x^kB(u,-u\partial_x u),\partial_x^ku\big)_2
=2\big(\partial_x^kQ(u,u\partial_x u),\partial_x^{k+1}u\big)_2\\
&=2\underbrace{\big(Q(u,\partial_x^k(u\partial_x u)),\partial_x^{k+1}u\big)_2}_{G_0\colon\text{highest-order term}}
+2\sum_{j=1}^k c_{k,j} \underbrace{\big(Q(\partial_x^j u,\partial_x^{k-j}(u\partial_x u)),\partial_x^{k+1}u\big)_2}_{G_j\colon\text{remainder term}}.\\
\end{aligned}
\end{equation*}

Although the two highest-order terms \(F_0\) and \(G_0\) posses the extra derivatives and singularities, the summation of them  completely cancel each other out. Indeed, an easy calculation shows  
\begin{align*}
F_0
=-\int_{\R^2}\overline{n(\eta-\xi,\xi)}\hat{u}(\xi-\eta)\mathcal{F}(\partial_x^k(u\partial_xu))(\eta)
\overline{\mathcal{F}(\partial_x^{k+1}u)(\xi)}\, \diff\eta\, \diff\xi,
\end{align*}
and then 
\begin{equation}\label{27}
\begin{aligned}
F_0+G_0
&=\int_{\R^2}\big[n(\xi-\eta,\eta)-\overline{n(\eta-\xi,\xi)}\big]\hat{u}(\xi-\eta)\\
&\quad \times \mathcal{F}(\partial_x^k(u\partial_xu))(\eta)
\overline{\mathcal{F}(\partial_x^{k+1}u)(\xi)}\, \diff\eta\, \diff\xi=0,
\end{aligned}
\end{equation}
where we have used the equality \(n(\xi-\eta,\eta)=\overline{n(\eta-\xi,\xi)}\) in view of \eqref{25}.

In the reminder terms,  \(F_j\), \(G_j\) for \(j=2,\ldots,k-1\) are easy terms. In fact, using integration by parts, one rewrites 
\[
F_j=\frac{1}{2}\big(B(\partial_x^ju,\partial_x^{k+1-j}u), \partial_x^ku^2\big)_2,\quad G_j=-\frac{1}{2}\big(B(\partial_x^j u,\partial_x^{k+1-j}u^2),\partial_x^{k}u\big)_2,
\]
and then apply Lemma \ref{le:rehearsal} together with the Gagliardo--Nirenberg inequality (for details, see the calculation in \cite{MR3995034}) to obtain
\begin{align}\label{28}
|\sum_{j=2}^{k-1}F_j |+|\sum_{j=2}^{k-1}G_j |
\lesssim \|u\|_{H^k}^4.
\end{align}
By the symmetry of \(Q\) and integration by parts,  one calculates  
	\begin{align*}
	G_k&= \frac{1}{2} (Q( \partial_x u^2,\partial_x^ku),\partial_x^{k+1}u)_2\\
	&= \frac{1}{2} \int_{\R^2} n(\xi-\eta,\eta) (\xi - \eta) \eta^k \xi^{k+1} \widehat{u^2}(\xi - \eta) \hat u(\eta) \overline{\hat u(\xi)} \, \diff\eta\, \diff\xi\\
	&= \frac{1}{2} \int n(\xi-\eta,\eta) (\xi - \eta)^2 \eta^k \xi^{k} \widehat{u^2}(\xi - \eta) \hat u(\eta) \overline{\hat u(\xi)} \, \diff\eta\, \diff\xi\\
	&\quad+ \frac{1}{2} \int_{\R^2} n(\xi-\eta,\eta) (\xi - \eta) \eta^{k+1} \xi^{k} \widehat{u^2}(\xi - \eta) \hat u(\eta) \overline{\hat u(\xi)} \, \diff\eta\, \diff\xi\\
	&= \frac{1}{2} \int_{\R^2} n(\xi-\eta,\eta) (\xi - \eta)^2 \eta^k \xi^{k} \widehat{u^2}(\xi - \eta) \hat u(\eta) \overline{\hat u(\xi)} \, \diff\eta\, \diff\xi\\
	&\quad - \frac{1}{2} (Q( \partial_x u^2,\partial_x^ku),\partial_x^{k+1}u)_2,
	\end{align*}
	where in the last equaility we have taken advantage of the anti-symmetry of \(n\) in \((\xi-\eta,\eta)\), and the fact that \(u\) is real whereas \(n\) is imaginary. Thus
	\begin{align*}\label{estimate:G_k}
	G_k = \frac{1}{2} \int_{\R^2} n(\xi-\eta,\eta) (\xi - \eta)^2 \eta^k \xi^{k} \widehat{u^2}(\xi - \eta) \hat u(\eta) \overline{\hat u(\xi)} \, \diff\eta\, \diff\xi.
	\end{align*}
Therefore via \eqref{estimate:n} we get 
\begin{equation}\label{30}
\begin{aligned}
|G_k|\lesssim\|u\|_{H^2}\|u\|_{H^3}\|u\|_{H^k}^2.
\end{aligned}
\end{equation}

The following lemma summarises \eqref{27}--\eqref{30}.
\begin{lemma}\label{lemma:ddt sim I} We have
	\[
	\frac{1}{2}\frac{\diff}{\diff t} E^{(k)}(t) \eqsim F_1 + G_1 +\bigO(\|u\|_{H^2}\|u\|_{H^3} + \|u\|_{H^k}^2) \|u\|_{H^k}^2,\quad k\geq 2j_*-1.
	\]
\end{lemma}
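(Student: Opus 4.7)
The plan is to assemble the cancellation and estimates \eqref{27}--\eqref{30} into a single bookkeeping statement, noting that all the analytic content has already been extracted; the lemma is a clean summary.

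First I would expand both terms on the right-hand side of \eqref{eq:quartic form} via the identity $B = \partial_x Q$ and the Leibniz rule, using that by symmetry of the multiplier $n$ one has $\partial_x Q(u,u) = 2 Q(u, \partial_x u)$. This produces
\[
\big(\partial_x^k(-u\partial_x u), \partial_x^k B(u,u)\big)_2 = 2 F_0 + 2 \sum_{j=1}^{k} \binom{k}{j} F_j,
\]
and, after one integration by parts to convert $\partial_x^k B(\cdot,\cdot) = \partial_x^{k+1} Q(\cdot,\cdot)$ and move that extra $\partial_x$ onto $\partial_x^k u$,
\[
2\big(\partial_x^k u, \partial_x^k B(-u\partial_x u, u)\big)_2 = 2 G_0 + 2 \sum_{j=1}^{k} \binom{k}{j} G_j,
\]
so that $\tfrac{1}{2}\tfrac{\diff}{\diff t}E^{(k)}(t)$ is the sum of the two right-hand sides. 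This is precisely the decomposition already recorded in the exposition preceding the lemma.

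Next I would plug in the three ingredients already established. The highest-order cancellation \eqref{27} removes $F_0 + G_0$. The symmetry of $n$ in its two arguments, visible directly from \eqref{25}, yields $F_j = F_{k+1-j}$ for $1 \le j \le k$; in particular $F_k = F_1$, so the $j=1$ and $j=k$ contributions on the $F$-side coalesce into a single constant multiple of $F_1$. The middle-range bound \eqref{28} controls $\sum_{j=2}^{k-1}(F_j + G_j)$ by $\|u\|_{H^k}^4 = \|u\|_{H^k}^2 \cdot \|u\|_{H^k}^2$, while the refined estimate \eqref{30} controls $G_k$ by $\|u\|_{H^2}\|u\|_{H^3}\|u\|_{H^k}^2$. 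Collecting and absorbing all numerical constants into the $\eqsim$ gives the stated equivalence, with the only surviving main contributions being constant multiples of $F_1$ and $G_1$.

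Since the hard analytic work (the algebraic cancellation, the middle-range estimate via Lemma~\ref{le:rehearsal} and Gagliardo--Nirenberg, and the anti-symmetry trick used for $G_k$) has already been done, there is no genuine obstacle in this lemma: it is an assembly statement. The only conceptual point worth flagging is the identification $F_k = F_1$ via $Q$-symmetry, which is what allows the main $F$-side term to be phrased as $F_1$ alone rather than $F_1 + F_k$; this is why the residual ``hard'' contribution to the energy evolution can be isolated in $F_1 + G_1$, the pair that will require the commutator reduction in Fourier space carried out in the remainder of Section~\ref{energy estimates}.
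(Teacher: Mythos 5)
Your proposal is correct and follows the paper's own route exactly: the lemma is indeed just the assembly of the decomposition into $F_0,F_j$ and $G_0,G_j$, the cancellation \eqref{27}, the bounds \eqref{28} and \eqref{30}, and the identification $F_k=F_1$ via the symmetry of $Q$ (your slightly more general observation $F_j=F_{k+1-j}$ contains the paper's remark as the special case it actually uses). Nothing is missing.
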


Note here that the earlier \(F_k=F_1\) by the symmetry of \(B\) and \(Q\), so we need only to focus on \(F_1\) and \(G_1\). However, there is still one extra derivative in both \(F_1\) and \(G_1\),   with which we shall perform  commutator estimates to treat in the next subsection. 

\subsection{Higher-order estimates: \(F_1\) and \(G_1\)}\label{subsec:F10 + G10}
Just as in \cite{MR3995034} we shall divide frequency space into several symmetric regions. The first of these are
\[
\mathcal{A}_1 = \{(\xi,\eta,\sigma)\in\R^3\colon\min\{|\xi|,|\eta|,|\sigma|\}< 1\},
\]
and it complement \(\mathcal{A}_1^c\), in which \(|\xi|,|\eta|,|\sigma| \geq 1\). Also, for convenience and to standardise the calculations, we shall use the measure notation
\[
\diff M(\hat u)= \hat{u}(\xi-\eta)\hat{u}(\eta-\sigma) \hat{u}(\sigma)\overline{\hat{u}(\xi)} \, \diff\xi \, \diff\eta \, \diff\sigma,
\]
for the quartic factor that will appear in many estimates. An estimate that will be used frequently for both \(\mathcal{A}_1\) and some later sets is
\begin{align}\label{pro:I}
|\xi| + |\eta| + |\sigma| \lesssim 1 +  |\xi - \eta| + |\eta - \sigma|.
\end{align}

To employ commutator estimates to handle the terms \(F_1\) and \(G_1\), 
we first split \(F_1\) and \(G_1\) into a low-frequencies part (to eliminate the singularities) and a high-frequencies part (to distribute the derivatives) respectively, and then extract the lower-order parts from the high-frequencies part. 
By integration by parts, we decompose \(F_1\) as 
\begin{equation*}
\begin{aligned}
F_1&=-\int_{\R^3}n(\xi-\eta,\eta)\widehat{\partial_xu}(\xi-\eta)\widehat{\partial_x^{k}u}(\eta) \overline{(\I\xi)^{k}\hat{u}(\xi-\sigma)\widehat{\partial_xu}(\sigma)} \, \diff\xi \, \diff\eta \, \diff\sigma\\
&=\int_{\R^3}m(\xi-\eta,\eta)\widehat{\partial_xu}(\xi-\eta)\widehat{\partial_x^{k}u}(\eta) \overline{(\I\xi)^{k-1}\hat{u}(\xi-\sigma)\widehat{\partial_xu}(\sigma)} \, \diff\xi \, \diff\eta \, \diff\sigma\\
&=\underbrace{\int_{\mathcal{A}_1}m(\xi-\eta,\eta)\widehat{\partial_xu}(\xi-\eta)\widehat{\partial_x^{k}u}(\eta) \overline{(\I\xi)^{k-1}\hat{u}(\xi-\sigma)\widehat{\partial_xu}(\sigma)} \, \diff\xi \, \diff\eta \, \diff\sigma}_{\mathcal{A}_1F_1}\\
&\quad+\underbrace{\int_{\mathcal{A}_1^c}m(\xi-\eta,\eta)\widehat{\partial_xu}(\xi-\eta)\widehat{\partial_x^{k}u}(\eta) \overline{(\I\xi)^{k-1}\hat{u}(\xi-\sigma)\widehat{\partial_xu}(\sigma)} \, \diff\xi \, \diff\eta \, \diff\sigma}_{\mathcal{A}_1^cF_1},
\end{aligned}
\end{equation*}
where we keep the derivative \(\xi^{k-1}\) for eliminating the possible low-frequency singularity at \(\xi\) in \(m(\xi-\eta,\eta)\) of \(\mathcal{A}_1F_1\), 
and then write the last term by Leibniz's law as 
\begin{equation*}
\begin{aligned}
&\mathcal{A}_1^cF_1
=\underbrace{\int_{\mathcal{A}_1^c}m(\xi-\eta,\eta)\widehat{\partial_xu}(\xi-\eta)\widehat{\partial_x^{k}u}(\eta) \overline{\hat{u}(\xi-\sigma)\widehat{\partial_x^{k}u}(\sigma)} \, \diff\xi \, \diff\eta \, \diff\sigma}_{F_{1,0}}\\
&+\sum_{l=1}^{k-1}c_{k-1,l}\underbrace{\int_{\mathcal{A}_1^c}m(\xi-\eta,\eta)\widehat{\partial_xu}(\xi-\eta)\widehat{\partial_x^{k}u}(\eta) \overline{\widehat{\partial_x^{l}u}(\xi-\sigma)\widehat{\partial_x^{k-l}u}(\sigma)} \, \diff\xi \, \diff\eta \, \diff\sigma}_{F_{1,l}\colon \text{lower-order term}},
\end{aligned}
\end{equation*}
where we have used \(\xi^{k-1}=[(\xi-\sigma)+\sigma]^{k-1}=\sum_{l=1}^{k-1}c_{k-1,l}(\xi-\sigma)^l\sigma^{k-l}\).

For \(G_1\), if \(k=1\) (which means \(j_*=1\)), to eliminate the low-frequency singularity, we write 
\begin{equation*}
\begin{aligned}
G_1= \frac{1}{2}(Q(\partial_xu,\partial_x u^2),\partial_x^2u)_2,
\end{aligned}
\end{equation*}
and then obtain
\begin{equation*}
\begin{aligned}
|G_1|\lesssim \|u\|_{H^1}^2\|u\|_{H^2}^2.
\end{aligned}
\end{equation*}
If \(k\geq 2\) (which means \(j_*\geq 2\)), we instead decompose \(G_1\) in the following manner: 
\begin{equation*}
\begin{aligned}
G_1&=\int_{\R^3}n(\xi-\eta,\eta)(\xi-\eta)\sigma\eta^{k-1}\xi^{k+1}\,  \diff M(\hat u)\\
&=-\I\int_{\R^3}m(\xi-\eta,\eta)(\xi-\eta)\sigma\eta^{k-1}\xi^{k}\,  \diff M(\hat u)\\
&=\underbrace{-\I\int_{\mathcal{A}_1}m(\xi-\eta,\eta)(\xi-\eta)\sigma\eta^{k-1}\xi^{k}\,  \diff M(\hat u)}_{\mathcal{A}_1G_1}\\
&\quad\underbrace{-\I\int_{\mathcal{A}_1^c}m(\xi-\eta,\eta)(\xi-\eta)\sigma\eta^{k-1}\xi^{k}\,  \diff M(\hat u)}_{\mathcal{A}_1^cG_1},
\end{aligned}
\end{equation*}
and then express the last term further by Leibniz's law as
\begin{equation*}
\begin{aligned}
&\mathcal{A}_1^cG_1
=\underbrace{-\I\int_{\mathcal{A}_1^c}m(\xi-\eta,\eta)(\xi-\eta)\eta\sigma^{k-1}\xi^{k}\,  \diff M(\hat u)}_{G_{1,0}}\\
&-\I\sum_{l=1}^{k-2}c_{k-1,l}\underbrace{\int_{\mathcal{A}_1^c}m(\xi-\eta,\eta)(\xi-\eta)\eta(\eta-\sigma)^{l}\sigma^{k-l-1}\xi^{k}\,  \diff M(\hat u)}_{G_{1,l}\colon \text{lower-order term}},
\end{aligned}
\end{equation*}
where we have used \(\eta^{k-2}=[(\eta-\sigma)+\sigma]^{k-2}=\sum_{l=1}^{k-2}c_{k-2,l}(\eta-\sigma)^l\sigma^{k-l-2}\).

Since the terms \(F_{1,l}\  (l=1,2,\cdots,k-1)\) and \(G_{1,l}\  (l=1,2,\cdots,k-2)\) have no extra derivative at high frequencies and no singularity at low frequencies, it is straightforward to verify that
\begin{align*}
|\sum_{l=1}^{k-1}F_{1,l} |+|\sum_{l=1}^{k-2}G_{1,l} |
\lesssim \|u\|_{H^2} \|u\|_{H^k}^3.
\end{align*}

We are now left with four terms: \(\mathcal{A}_1 F_1, \mathcal{A}_1 G_1\) and \(F_{1,0},  G_{1,0}\). The former two terms involve low frequencies, the latter two terms only contain high frequencies. We first handle the terms involving low frequencies.  

\begin{lemma}\label{lemma: A_1 estimates}
	We have
	\[
	| \mathcal{A}_1 F_1 | + | \mathcal{A}_1 G_1 | \lesssim \|u\|_{H^1}\|u\|_{H^2}\|u\|_{H^k}^2.
	\]
\end{lemma}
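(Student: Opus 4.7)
The plan is to bound the multiplier pointwise by $m\lesssim m_1+m_2$ via Corollary~\ref{cor:m}, and then subdivide $\mathcal{A}_1$ into the three (possibly overlapping) pieces $\{|\xi|<1\}$, $\{|\eta|<1\}$ and $\{|\sigma|<1\}$, so that in each region we have an identified small variable. The key algebraic point is that the external factor $|\xi|^{k-1}$ appearing in $\mathcal{A}_1 F_1$ (resp.\ $|\xi|^k$ in $\mathcal{A}_1 G_1$) exactly absorbs the $|\xi|^{-(2j_*-2)}$ singularity of $m_1$, because $k\geq 2j_*-1$ renders the residual powers $|\xi|^{k-2j_*+1}$ and $|\xi|^{k-2j_*+2}$ nonnegative. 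Simultaneously, the derivative factors $|\xi-\eta|$, $|\eta|^{k-1}$ and $|\eta|^k$ coming from $\widehat{\partial_x u}(\xi-\eta)$ and $\widehat{\partial_x^k u}(\eta)$ cancel the low-frequency singularities $1/|\eta|$ and $1/|\xi-\eta|$ present in both $m_1$ and $m_2$, leaving only nonnegative monomials in $|\xi|,|\eta|,|\sigma|,|\xi-\eta|$ (and $|\eta-\sigma|$ for $G_1$) multiplying four unsigned Fourier factors.

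The next step is to shift the leftover power of $|\xi|$ onto a large variable. For $\mathcal{A}_1 G_1$, which is written against $\diff M(\hat u)$, this is provided directly by \eqref{pro:I}. For $\mathcal{A}_1 F_1$, whose integrand uses the arguments $\xi-\eta,\eta,\xi-\sigma,\sigma$, I would apply the triangle inequality in each subregion separately: $|\xi|\lesssim 1$ when $|\xi|<1$, $|\xi|\lesssim 1+|\xi-\eta|$ when $|\eta|<1$, and $|\xi|\lesssim 1+|\xi-\sigma|$ when $|\sigma|<1$; the remaining power of the small variable is bounded by $1$, and the transferred power is distributed onto one of the four $\hat u$-factors. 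The resulting multilinear integrals are then bounded via Fubini together with two elementary estimates: on a bounded piece, $\int_{|s|<1}|\hat u(s)|\,\diff s\lesssim \|u\|_{L^2}$ by Cauchy--Schwarz; elsewhere, either a convolution bound $\||\hat f|\ast|\hat g|\|_{L^\infty}\leq\|f\|_{L^2}\|g\|_{L^2}$ (in the spirit of Lemma~\ref{le:rehearsal}), or the one-dimensional weighted estimate $\int(1+|\zeta|)^a|\hat u(\zeta)|\,\diff\zeta\lesssim\|u\|_{H^{a+1}}$ via Cauchy--Schwarz against $(1+|\cdot|)^{-1}\in L^2(\R)$. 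Going through the three subregions together with the two parts $m_1$ and $m_2$, every term ends up bounded by a product of Sobolev norms of total order at most $2k+3$ with no individual factor exceeding $H^k$, hence by $\|u\|_{H^1}\|u\|_{H^2}\|u\|_{H^k}^2$ using $\|u\|_{L^2}\leq\|u\|_{H^1}\leq\|u\|_{H^2}$.

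The main obstacle is simply the bookkeeping across the resulting case split, especially as $m_2$ itself produces several monomials each with its own singular piece; however, the essential work is done already at the symbol level, since in $\mathcal{A}_1$ at least one variable is bounded, \eqref{pro:I} or its $F_1$-analogue is available, and all singularities in $m_1$ and $m_2$ are cancelled by the intrinsic derivatives of the integrand. No genuinely new analytical estimate is required beyond the two elementary bounds listed above, which is why this low-frequency piece is strictly easier than the high-frequency terms $F_{1,0}$ and $G_{1,0}$ treated in the next subsection.
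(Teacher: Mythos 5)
Your proposal is correct and follows essentially the same route as the paper's proof: bound \(m\) by \(m_1+m_2\) via Corollary~\ref{cor:m}, observe that the intrinsic factors \((\xi-\eta)\eta^k\xi^{k-1}\) (resp.\ \((\xi-\eta)\sigma\eta^{k-1}\xi^{k}\)) cancel all low-frequency singularities because \(k\geq 2j_*-1\), redistribute \(\xi^{k-1}\) binomially, and move the one surplus \(|\xi|\) from \(m_2\) onto a difference variable using \eqref{pro:I} (or its \(F_1\)-analogue with \(\xi-\sigma\)) before closing with standard \(L^2\times L^1\) convolution estimates. Your explicit three-way subdivision of \(\mathcal{A}_1\) and the spelled-out multilinear bounds are simply a more detailed rendering of what the paper leaves implicit.
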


\begin{proof} 
We recall that 
	\begin{equation*}
	\begin{aligned}
	\mathcal{A}_1 F_1=\int_{\mathcal{A}_1}m(\xi-\eta,\eta)\widehat{\partial_xu}(\xi-\eta)\widehat{\partial_x^{k}u}(\eta) \overline{(\I\xi)^{k-1}\hat{u}(\xi-\sigma)\widehat{\partial_xu}(\sigma)} \, \diff\xi \, \diff\eta \, \diff\sigma.
	\end{aligned}
	\end{equation*}
The part \(\mathcal{A}_1 F_1(m_1)\) is straightforward since the factor \((\xi - \eta)  \eta^k \xi^{k-1}\) eliminates the low-frequency singularities. Considering instead \(\mathcal{A}_1 F_1(m_2)\),
the worst case is that all the derivatives \(\xi^{k-1}=\sigma^{k-1}+\sum_{l=1}^{k-1}c_{k-1,l}(\xi-\sigma)^l\sigma^{k-1-l}\) fall on \(\widehat{\partial_xu}(\sigma)\), which yields a full \(\widehat{\partial_x^ku}(\sigma)\). But in \(\mathcal{A}_1\), the triangle inequality \eqref{pro:I} allows us to move the one extra derivative \(\xi\) in \(m_2\)  to \(\xi-\eta\) or \(\xi-\sigma\) to get the desired bound. 

As for
\begin{equation*}
\begin{aligned}
\mathcal{A}_1 G_1=-\I\int_{\mathcal{A}_1}m(\xi-\eta,\eta)(\xi-\eta)\sigma\eta^{k-1}\xi^{k}\,  \diff M(\hat u).
\end{aligned}
\end{equation*}
the difficult part is \(\mathcal{A}_1 G_1(m_2)\), with the worst situation when all the derivatives \(\eta^{k-1}\sigma\) fall on \(\widehat{u}(\sigma)\). This again gives a maximal \(\widehat{\partial_x^ku}(\sigma)\). Noting the definition of \(\diff M(\hat u)\), just as above we invoke \eqref{pro:I} to move the one extra derivative \(\xi\) in \(m_2\)  to \(\xi-\eta\) or \(\eta-\sigma\) to obtain the bound of Lemma~\ref{lemma: A_1 estimates}. 
\end{proof}

We next make the following very useful observation. 
\begin{lemma}\label{lemma: F10 sim G10} 
	One has
	\[
	F_{1,0} =  G_{1,0} + \bigO(\|u\|_{H^2}\|u\|_{H^3}\|u\|_{H^k}^2).
	\]
\end{lemma}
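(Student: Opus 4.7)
The plan is to substitute in $F_{1,0}$ so that its quartic factor matches $\diff M(\hat u)$, then to estimate the two resulting integrals via Corollary~\ref{cor:m} and Lemma~\ref{le:rehearsal}. Concretely, in the Fourier integral defining $F_{1,0}$ I write the old variables in terms of new as $\xi = \xi' - \sigma'$, $\eta = \eta' - \sigma'$, $\sigma = \xi'$; the Jacobian equals $1$, and this transforms $\hat u(\xi - \eta)\hat u(\eta)\overline{\hat u(\xi - \sigma)\hat u(\sigma)}$ into $\diff M(\hat u)$, the symbol $m(\xi - \eta, \eta)$ into $m(\xi' - \eta', \eta' - \sigma')$, the polynomial $(\xi - \eta)\eta^k \sigma^k$ into $(\xi' - \eta')(\eta' - \sigma')^k (\xi')^k$, and the region $\mathcal{A}_1^c = \{\min(|\xi|, |\eta|, |\sigma|) \geq 1\}$ into $\mathcal{B} = \{\min(|\xi' - \sigma'|, |\eta' - \sigma'|, |\xi'|) \geq 1\}$.

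After this transformation and dropping the primes,
\[
F_{1,0} - G_{1,0} = \I \int_{\mathcal{B}} m(\xi-\eta, \eta-\sigma)(\xi-\eta)(\eta-\sigma)^k \xi^k\, \diff M(\hat u) + \I \int_{\mathcal{A}_1^c} m(\xi-\eta, \eta)(\xi-\eta)\eta\sigma^{k-1}\xi^k\, \diff M(\hat u).
\]
Each of the two integrals is estimated separately. Using $|m| \lesssim m_1 + m_2$ from Corollary~\ref{cor:m}, the $m_1$-portion of each integral has its low-frequency singularities absorbed by the polynomial factors $(\xi-\eta)$, $\xi^k$, together with $(\eta-\sigma)^k$ or $\sigma^{k-1}$, and by the conditions defining $\mathcal{B}$ or $\mathcal{A}_1^c$. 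The $m_2$-portion contributes one extra Fourier variable; using identities such as $\xi - \sigma = (\xi-\eta)+(\eta-\sigma)$ and $\xi = (\xi-\eta)+\eta$, this excess is redistributed among the four $\hat u$-slots of $\diff M(\hat u)$ so that two slots absorb the leading $k$-th order derivatives (yielding $\|u\|_{H^k}^2$) while the other two carry at most $2$ and $3$ derivatives respectively (yielding $\|u\|_{H^2}\|u\|_{H^3}$). Lemma~\ref{le:rehearsal} then supplies the Fourier-side convolution estimate that closes the bound.

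The principal obstacle is the contribution of the symmetric difference $\mathcal{B} \triangle \mathcal{A}_1^c$: in the part of $\mathcal{B}$ where some of $|\eta|, |\sigma|$ falls below~$1$, and symmetrically for $\mathcal{A}_1^c$, one of the four $\hat u$-factors sits at low frequency and contributes only an $\|u\|_{L^2}$-type factor. Since this loses no derivatives, the redistribution described above can still be performed on the remaining three factors, preserving the $\|u\|_{H^2}\|u\|_{H^3}\|u\|_{H^k}^2$ bound; combining these estimates completes the proof of the lemma.
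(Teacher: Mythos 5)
Your change of variables is a sensible opening move, and it is close in spirit to what the paper does: both of you rewrite $F_{1,0}$ so that its quartic factor becomes $\diff M(\hat u)$. (The paper uses a cyclic permutation of $(\xi,\eta,\sigma)$, under which $\mathcal{A}_1^c$ is invariant, so its rewritten $F_{1,0}$ lives over the \emph{same} region $\mathcal{A}_1^c$ as $G_{1,0}$; your shear sends $\mathcal{A}_1^c$ to a different region, which already prevents the two integrands from being combined.) The fatal step, however, is the decision to ``estimate each of the two integrals separately.'' Neither $F_{1,0}$ nor $G_{1,0}$ individually obeys the bound $\|u\|_{H^2}\|u\|_{H^3}\|u\|_{H^k}^2$ by pointwise symbol estimates, and the paper says so explicitly at the start of Section~5. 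Count derivatives on the diagonal $\xi\eqsim\eta\eqsim\sigma\to\infty$ with $|\xi-\eta|,|\eta-\sigma|\lesssim 1$: in your first integral the polynomial $(\xi-\eta)(\eta-\sigma)^k\xi^k$ together with the term $|\xi-\sigma|/|\xi-\eta|$ from $m_2(\xi-\eta,\eta-\sigma)$ produces a symbol of size $\eqsim |\xi|^{2k+1}$, while the only slots of $\diff M(\hat u)$ that carry large frequencies there are $\hat u(\sigma)$ and $\overline{\hat u(\xi)}$; two $H^k$ factors absorb at most $2k$ powers, so one power is irretrievably left over (any attempt to place it via $\xi=(\xi-\eta)+(\eta-\sigma)+\sigma$ lands it on $\sigma$, demanding $H^{k+1}$). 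The same obstruction holds for the second integral. This is precisely the ``one extra derivative'' that the remainder of Section~5 ($\mathcal{A}_2$, the sets $\mathcal{B}_1,\mathcal{B}_2$, and the main commutator of Proposition~5.8) is designed to remove; if each term were separately bounded as you claim, all of that machinery would be superfluous.

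What the lemma actually exploits is a cancellation \emph{between} $F_{1,0}$ and $G_{1,0}$, and your decomposition never exposes it. The paper chooses the permutations so that, after also invoking the reality of $u$ and the identity $m(\sigma-\eta,\eta)\,\eta\,(\sigma-\eta)=m(\eta-\sigma,\sigma)\,\sigma\,(\eta-\sigma)$ (a consequence of the symmetries of $\varphi$ in Lemma~3.1), the two integrals sit over the same region $\mathcal{A}_1^c$ with multipliers $m(\eta-\sigma,\sigma)(\eta-\sigma)\sigma^k\xi^{k-1}\cdot\xi$ and $m(\eta-\sigma,\sigma)(\eta-\sigma)\sigma^k\xi^{k-1}\cdot\sigma$ respectively. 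Their difference therefore carries the factor $\xi-\sigma=(\xi-\eta)+(\eta-\sigma)$, which moves the surplus derivative onto the low-order slots $\hat u(\xi-\eta)$, $\hat u(\eta-\sigma)$ and yields the $H^2\times H^3\times H^k\times H^k$ bound. Without aligning the two symbols up to such a difference factor, the subtraction buys you nothing, and the separate estimates you propose are simply false. (Your final paragraph also misdiagnoses the difficulty: low-frequency slots in $\mathcal{B}\triangle\mathcal{A}_1^c$ are harmless; the problem is the high-frequency diagonal.)
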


\begin{proof} Note that the set \(\mathcal{A}_1^c\) is invariant under changes of variables among the variables \(\xi\), \(\eta\) and \(\sigma\).
We now apply the changes of variables \(\xi \leftrightarrow \sigma \leftrightarrow \eta\) to get	
	
\begin{equation*}
\begin{aligned}
F_{1,0}
&=\I\int_{\mathcal{A}_1^c}m(\xi-\eta,\eta)(\xi-\eta)\eta^k\sigma^k\hat{u}(\xi-\eta)\hat{u}(\eta) \overline{\hat{u}(\xi-\sigma)\hat{u}(\sigma)} \, \diff\xi \, \diff\eta \, \diff\sigma\\
& =\I\int_{\mathcal{A}_1^c}m(\eta-\sigma,\sigma) (\eta-\sigma) \sigma^k \xi^k  \, \diff M(\hat u),
\end{aligned}
\end{equation*}
and \(\eta \leftrightarrow \xi \leftrightarrow \sigma \leftrightarrow \eta\) to yield	
	\begin{align*}
	G_{1,0} &= -\I \int_{\mathcal{A}_1^c}m(\xi-\eta,\eta) \eta (\xi-\eta) \sigma^{k-1} \xi^k \, \diff M(\hat u)\\
	&= -\I \int_{\mathcal{A}_1^c}m(\sigma-\eta,\eta) \eta (\sigma - \eta) \xi^{k-1} \sigma^k \hat{u}(\sigma-\eta)\hat{u}(\eta-\xi) \hat{u}(\xi)\overline{\hat{u}(\sigma)} \, \diff\xi \, \diff\eta \, \diff\sigma\\
	&= \I \int_{\mathcal{A}_1^c}m(\eta-\sigma,\sigma) \sigma (\eta - \sigma) \xi^{k-1} \sigma^k \hat{u}(\eta-\sigma)\hat{u}(\xi-\eta) \hat{u}(\sigma)\overline{\hat{u}(\xi)} \, \diff\xi \, \diff\eta \, \diff\sigma.
	\end{align*}
In the last equality we have utilized that the fact that \( G_{1,0}\) and \(u\) are real and the equality
\[
m(\sigma - \eta, \eta) \eta (\sigma - \eta) = m(\eta - \sigma, \sigma) \sigma (\eta - \sigma).
\]
We see that
	\[
	F_{1,0} - G_{1,0} = \I \int_{\mathcal{A}_1^c}m(\eta-\sigma,\sigma) (\eta - \sigma) (\xi - \sigma) \xi^{k-1} \sigma^k\, \diff M(\hat u).
	\]
The factor \((\xi - \sigma)\) has a difference structure and may be controlled by \(|\xi - \eta| + |\eta - \sigma|\). 
In \(\mathcal{A}_1^c\), the one extra derivative \(\xi\) in \(m_2\) is then added to  \(\xi^{k-1}\) to yield the \(H^2 \times H^3 \times H^k \times H^k\)-estimate. 
\end{proof}

From now on we will use the expressions  
\begin{equation*}
\begin{aligned}
F_{1,0}
=\I\int_{\mathcal{A}_1^c}m(\eta-\sigma,\sigma) (\eta-\sigma) \sigma^k \xi^k\,  \diff M(\hat u),
\end{aligned}
\end{equation*}
and
\begin{align*}
G_{1,0} = -\I \int_{\mathcal{A}_1^c}m(\xi-\eta,\eta)  (\xi-\eta) \eta\sigma^{k-1} \xi^k\,  \diff M(\hat u).
\end{align*}
Lemma \ref{lemma: F10 sim G10} means that we may write 
\[ 
F_{1,0}=\frac{1}{2} (F_{1,0} + G_{1,0})+\bigO(\|u\|_{H^2} \|u\|_{H^3} \|u\|_{H^k}^2).
\] 
The terms  \(F_{1,0} + G_{1,0}\) and \(F_{1,0}\) are thus formally equivalent, but we will for convenience use \(F_{1,0} + G_{1,0}\) below to obtain a good commutator. We again split the remaining frequency space into two parts, 
\begin{align*}
&\mathcal{A}_2 =\{(\xi,\eta,\sigma)\in {\mathcal A}_1^c  \colon {\textstyle \frac{1}{10}} |z_2| < |z_1-z_2|+|z_2-z_3|,\\
&\quad\quad\quad\quad\quad\quad\quad\quad\quad\quad\quad\quad\quad\quad\quad\quad  \text{for some choice of } z_j  = \xi,\eta,\sigma\},
\end{align*}
and its complement \(\mathcal{A}_2^c\), in which  \(\frac{1}{10} |z_2| \geq |z_1-z_2|+  |z_2-z_3|\) for all choices of \(z_j  = \xi,\eta,\sigma\). As above, we let \(\mathcal{A}_2 F_{1,0}\) denote the restriction of the integral \(F_{1,0}\) to the set \(\mathcal{A}_2\), and similarly for other integrals and sets to come, so that 
\[\F_{1,0} + G_{1,0}=\mathcal{A}_2(F_{1,0} + G_{1,0})+\mathcal{A}_2^c(F_{1,0} + G_{1,0}).\]
Just as in \(\mathcal{A}_1\), the triangle inequality \eqref{pro:I} holds in \(\mathcal{A}_2\), allowing us to move derivatives from \(\xi, \eta\) and \(\sigma\)  to \(\xi - \eta\) and \(\eta - \sigma\), so  the term \(\mathcal{A}_2(F_{1,0} + G_{1,0})\) may be handled separately, with the following resulting estimate.

\begin{lemma}\label{lemma: A_2 estimates}
	We have
	\[
	| {\mathcal A}_2 F_{1,0} | + | {\mathcal A}_2 G_{1,0} | \lesssim \|u\|_{H^2}\|u\|_{H^3}\|u\|_{H^k}^2.
	\]
\end{lemma}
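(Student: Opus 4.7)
The strategy is to apply the decomposition $|m| \lesssim m_1 + m_2$ from Corollary~\ref{cor:m} to both integrals and estimate each piece separately, exploiting the fact that the triangle inequality~\eqref{pro:I}, $|\xi| + |\eta| + |\sigma| \lesssim 1 + |\xi-\eta| + |\eta-\sigma|$, holds throughout $\mathcal{A}_2$. Indeed, the defining property of $\mathcal{A}_2$ bounds one of $\xi$, $\eta$, $\sigma$ in terms of a relevant pair of differences, and the other two then follow from the ordinary triangle inequality together with $|\xi - \sigma| \leq |\xi-\eta| + |\eta-\sigma|$. This is precisely what will allow any surplus derivative supplied by $m_2$ to be migrated onto a low-frequency difference factor, where extra regularity is freely available.

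The $m_1$ contributions will be the easy step. On $\mathcal{A}_1^c \supset \mathcal{A}_2$ all three variables satisfy $|\xi|,|\eta|,|\sigma| \geq 1$, so the $|\eta|^{-(2j_*-2)}$ and $|\xi|^{-(2j_*-2)}$ singularities inside the respective $m_1$ are trivially bounded. The remaining $1/|\eta-\sigma|$ factor in $\mathcal{A}_2 F_{1,0}$, respectively $1/|\xi-\eta|$ in $\mathcal{A}_2 G_{1,0}$, is cancelled exactly by the $(\eta-\sigma)$, respectively $(\xi-\eta)$, factor already present in the integrand. What remains is controlled by Cauchy--Schwarz and Young's inequality in the spirit of Lemma~\ref{le:rehearsal}, producing a bound of the form $\|u\|_{H^1}\|u\|_{H^{k-1}}\|u\|_{H^k}^2$, well within the target.

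The substantive work is the $m_2$ piece. For $\mathcal{A}_2 F_{1,0}(m_2)$, I would use $m_2(\eta-\sigma,\sigma) \lesssim 1 + |\eta| + |\eta|/|\eta-\sigma|$ (valid because $|\sigma| \geq 1$), absorb the $1/|\eta-\sigma|$ against the existing $(\eta-\sigma)$, and then disperse the surplus $|\eta|$ via \eqref{pro:I}. This majorises the integrand by a sum of monomials $|\xi-\eta|^a|\eta-\sigma|^b|\sigma|^k|\xi|^k$ with $a,b \geq 0$, $a+b \leq 3$ and $\max(a,b) \leq 3$; applying Young's convolution inequality in the two difference variables, while keeping the two factors of order $k$ in $L^2$ and placing the remaining two in a suitable mix of $L^1$ and $L^2$, bounds everything by $\|u\|_{H^2}\|u\|_{H^3}\|u\|_{H^k}^2$, the $H^3$ arising from the worst case $\{a,b\} = \{0,3\}$ or $\{1,2\}$. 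The treatment of $\mathcal{A}_2 G_{1,0}(m_2)$ is entirely parallel, starting from $m_2(\xi-\eta,\eta) \lesssim 1 + |\xi| + |\xi|/|\xi-\eta|$: the $1/|\xi-\eta|$ is killed by the $(\xi-\eta)$ factor, and then both the freestanding $|\eta|$ and the surplus $|\xi|$ in the integrand are moved onto $|\xi-\eta|$ and $|\eta-\sigma|$ by \eqref{pro:I}. The only real obstacle is the combinatorial bookkeeping ensuring that no single difference factor accumulates more than three derivatives in total; this is automatic here because $m_2$ contributes at most one extra derivative beyond what was already in $F_{1,0}$ and $G_{1,0}$, and the genuinely delicate commutator-based analysis is reserved for the complementary region $\mathcal{A}_2^c$.
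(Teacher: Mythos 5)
Your proposal is correct and follows essentially the same route as the paper, which only sketches this step: bound \(m\) by \(m_1+m_2\) from Corollary~\ref{cor:m}, cancel the low-frequency singularities against the factors \((\eta-\sigma)\) resp.\ \((\xi-\eta)\eta\) already present (harmless on \(\mathcal{A}_1^c\)), and use \eqref{pro:I} on \(\mathcal{A}_2\) to push the surplus derivatives onto the difference variables before closing with a Young-type convolution estimate. The only caveat is a small bookkeeping point you already hedge with ``a suitable mix of \(L^1\) and \(L^2\)'': a difference factor carrying three derivatives cannot sit in an \(L^1\) slot (that would cost \(H^4\)), so in the extreme monomial \(|\xi-\eta|^3\sigma^{k-1}\xi^k\) arising from \(G_{1,0}\) one must either put that factor in \(L^2\) and demote the \(\sigma^{k-1}\)-factor to \(L^1\), or first absorb one power of \(\eta\) into \(\sigma^{k-1}\) via \(|\eta|\leq|\sigma|+|\eta-\sigma|\); either fix keeps the bound at \(\|u\|_{H^2}\|u\|_{H^3}\|u\|_{H^k}^2\).
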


We shall use symmetry to reduce the integrals further. To that aim, let
\[
\mathcal{A}_{2,+}^c = \left\{ (\xi,\eta, \sigma) \in \mathcal{A}_2^c \colon \xi, \eta, \sigma \geq 1\right\},
\]
be the `positive' part of \(\mathcal{A}_{2,+}^c\). Then, in \(\mathcal{A}_{2,+}^c\), we in place of the earlier triangle inequality \eqref{pro:I} obtain the equivalence
\begin{align}\label{pro:II}
\xi\eqsim\sigma\eqsim\eta\eqsim\xi-\eta+\sigma \gtrsim 1,
\end{align}
which again will help us to exchange derivatives. What remains is \(\mathcal{A}_2^c (G_{1,0}+F_{1,0})\). Since both the multiplier \(m\) and the solution \(u\) are real, by the shift of variables \((\xi, \eta, \sigma) \rightarrow -(\xi, \eta, \sigma)\), one observes that 
\[
\mathcal{A}_2^c F_{1,0}=2\mathcal{A}_{2,+}^c F_{1,0}, \quad  \mathcal{A}_2^c G_{1,0}=2\mathcal{A}_{2,+}^c G_{1,0}.
\]
We then decompose 
\(\mathcal{A}_2^c F_{1,0}\) as
\begin{align*}
\mathcal{A}_{2,+}^c F_{1,0}&=\I \int_{\mathcal{A}_{2,+}^c} m(\eta-\sigma,\sigma) (\eta - \sigma) \sigma^k \xi^k \, \diff M(\hat u)\\ 
&= \underbrace{\I \int_{\mathcal{A}_{2,+}^c}\frac{m(\eta-\sigma,\sigma)}{\eta} (\eta - \xi) (\eta - \sigma) \sigma^k \xi^k \, \diff M(\hat u)}_{\mathcal{A}_{2,+}^c F_{1,0}^{(\eta - \xi)}}\\ 
&\quad+\underbrace{ \I \int_{\mathcal{A}_{2,+}^c} \frac{m(\eta-\sigma,\sigma)}{\eta}  (\eta - \sigma) \sigma^k \xi^{k+1} \, \diff M(\hat u)}_{\mathcal{A}_{2,+}^c \tilde F_{1,0}},
\end{align*}
and \(\mathcal{A}_2^c G_{1,0}\) as
\begin{align*}
\mathcal{A}_{2,+}^c G_{1,0}&=-\I \int_{\mathcal{A}_{2,+}^c} m(\xi-\eta,\eta) \eta (\xi-\eta) \sigma^{k-1} \xi^k  \diff M(\hat u)\\
&= \underbrace{-\I \int_{\mathcal{A}_{2,+}^c}  m(\xi-\eta,\eta) (\eta - \sigma) (\xi-\eta) \sigma^{k-1} \xi^k  \diff M(\hat u)}_{\mathcal{A}_{2,+}^c G_{1,0}^{(\eta - \sigma)}}\\
&\quad \underbrace{-\I \int_{\mathcal{A}_{2,+}^c}  m(\xi-\eta,\eta) (\xi-\eta) \sigma^{k} \xi^{k}  \diff M(\hat u)}_{\mathcal{A}_{2,+}^c \tilde G_{1,0}}.
\end{align*}
Note that the symbol \(\frac{m(\eta-\sigma,\sigma)}{\eta}\) is bounded when considering \(\mathcal{A}_{2,+}^c F_{1,0}^{(\eta - \xi)}\). The one extra derivate \(\xi\) in \(\mathcal{A}_{2,+}^c G_{1,0}^{(\eta - \sigma)}\) may also be transmitted to \(|\xi-\eta|+|\eta-\sigma|+\sigma\), so as to give us the following reduction.

\begin{lemma}\label{prop:tilde FG}
\[\mathcal{A}_2^c (F_{1,0}+G_{1,0}) = \mathcal{A}_2^c (\tilde F_{1,0}+\tilde G_{1,0}) + \bigO(\|u\|_{H^2}^2 \|u\|_{H^k}^2),\]
with \(\mathcal{A}_2^c (\tilde F_{1,0}+\tilde G_{1,0})\) given by the integral
\begin{equation}\label{eq:tilde F + G}
\begin{aligned}
& \I \int_{\mathcal{A}_{2,+}^c} \left[ \frac{m(\eta-\sigma,\sigma)}{\eta}  (\eta - \sigma)-\frac{m(\xi-\eta,\eta)}{\xi} (\xi-\eta)\right]  \sigma^{k} \xi^{k+1}\,  \diff M(\hat u).
\end{aligned}
\end{equation}
\end{lemma}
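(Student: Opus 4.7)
The plan is to reduce the lemma to two routine quartic estimates on the error pieces $\mathcal{A}_{2,+}^c F_{1,0}^{(\eta-\xi)}$ and $\mathcal{A}_{2,+}^c G_{1,0}^{(\eta-\sigma)}$, and a purely algebraic recombination of $\tilde F_{1,0}$ and $\tilde G_{1,0}$ into the commutator form \eqref{eq:tilde F + G}. The reduction from $\mathcal{A}_2^c$ to $\mathcal{A}_{2,+}^c$ has already been carried out via the shift $(\xi,\eta,\sigma)\mapsto-(\xi,\eta,\sigma)$, and on $\mathcal{A}_{2,+}^c$ the equivalence \eqref{pro:II} gives $\xi\eqsim\eta\eqsim\sigma\geq 1$, while the differences $|\xi-\eta|,|\eta-\sigma|$ are strictly dominated by $\eta/10$.

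First I would pin down the relevant multiplier bounds. Applying Lemma~\ref{lemma:m} with arguments $(\eta-\sigma,\sigma)$ on $\mathcal{A}_{2,+}^c$ and noting that $r^2=(\eta-\sigma)^2+\sigma^2\eqsim\sigma^2$, the far-field regime of the lemma yields $|\varphi(\eta-\sigma,\sigma)|\eqsim|\eta-\sigma|(1+\sigma^\alpha)$, and consequently
\[
\left|\frac{m(\eta-\sigma,\sigma)}{\eta}(\eta-\sigma)\right|\eqsim\frac{1}{1+\sigma^\alpha}\lesssim 1,
\]
uniformly in $\alpha\in[-1,1]\setminus\{0\}$. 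The analogous computation with arguments $(\xi-\eta,\eta)$ produces $|m(\xi-\eta,\eta)(\xi-\eta)|\lesssim\xi$ on $\mathcal{A}_{2,+}^c$.

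With these two pointwise bounds in hand, the error terms yield to standard Fourier manipulations of the type used in the proof of Lemma~\ref{lemma: A_1 estimates}. For $\mathcal{A}_{2,+}^c F_{1,0}^{(\eta-\xi)}$ the bounded multiplier $\frac{m(\eta-\sigma,\sigma)}{\eta}(\eta-\sigma)$ absorbs the difference $(\eta-\sigma)$, while $(\eta-\xi)\hat u(\xi-\eta)$ is absorbed into $\widehat{\partial_x u}(\xi-\eta)$; the integrand is then controlled by $|\widehat{\partial_x u}(\xi-\eta)||\hat u(\eta-\sigma)||\widehat{\partial_x^k u}(\sigma)||\widehat{\partial_x^k u}(\xi)|$, and Young's inequality for convolutions together with the one-dimensional embedding $\|\hat f\|_{L^1}\lesssim\|f\|_{H^1}$ give the estimate $\|\partial_x u\|_{H^1}\|u\|_{H^1}\|\partial_x^k u\|_{L^2}^2\lesssim\|u\|_{H^2}^2\|u\|_{H^k}^2$. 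For $\mathcal{A}_{2,+}^c G_{1,0}^{(\eta-\sigma)}$ the bound $|m(\xi-\eta,\eta)(\xi-\eta)|\lesssim\xi$ reduces the symbolic weight to $\xi\cdot\sigma^{k-1}\xi^k$; the equivalence $\xi\eqsim\sigma$ on $\mathcal{A}_{2,+}^c$ lets this extra $\xi$ be traded for a $\sigma$, yielding $\sigma^k\xi^k$, after which $(\eta-\sigma)$ is absorbed into $\widehat{\partial_x u}(\eta-\sigma)$ and the same convolution argument closes the estimate.

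Finally, for the recombination of $\tilde F_{1,0}$ and $\tilde G_{1,0}$, one simply writes $\xi^k=\xi^{k+1}/\xi$ inside $\tilde G_{1,0}$, which is legal because $|\xi|\geq 1$ on $\mathcal{A}_{2,+}^c$, and factors the common weight $\sigma^k\xi^{k+1}$ out of both integrands; the remaining bracket is exactly the difference displayed in \eqref{eq:tilde F + G}. The one genuine obstacle in the argument is the multiplier analysis in the second paragraph: one must verify that the $|\eta-\sigma|^{-1}$-type singularity of $m(\eta-\sigma,\sigma)$, which is dangerous precisely when $\eta$ and $\sigma$ are close, is compensated by the factor $(\eta-\sigma)$ that was deliberately introduced by the splitting $\eta=(\eta-\sigma)+\sigma$ used to define $\tilde F_{1,0}$ and $\tilde G_{1,0}$, and that the residual bound is $\alpha$-uniform on $[-1,1]\setminus\{0\}$. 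Once that cancellation is established, everything else is algebra and routine Sobolev estimates.
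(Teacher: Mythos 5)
Your proposal is correct and follows essentially the same route as the paper: the same splitting of $\mathcal{A}_{2,+}^c F_{1,0}$ and $\mathcal{A}_{2,+}^c G_{1,0}$ into the main parts $\tilde F_{1,0},\tilde G_{1,0}$ and the error parts carrying the difference factors $(\eta-\xi)$ and $(\eta-\sigma)$, the same pointwise control of the error multipliers on $\mathcal{A}_{2,+}^c$ using \eqref{pro:II}, and the same algebraic rewriting $\xi^k=\xi^{k+1}/\xi$ (legitimate since $\xi\geq1$ there) to produce the commutator \eqref{eq:tilde F + G}. One caveat on the step you yourself single out as the genuine obstacle: your claim that $|m(\eta-\sigma,\sigma)(\eta-\sigma)/\eta|\eqsim(1+\sigma^\alpha)^{-1}\lesssim1$ uniformly in $\alpha$ rests on the displayed formula of Lemma~\ref{lemma:m} with the factor $1+r^\alpha$; the sharper far-field form recorded in Corollary~\ref{cor:m} (and at the end of the proof of Lemma~\ref{lemma:m}) replaces $1+r^\alpha$ by $(1+|\eta-\sigma|)^\alpha+(1+\sigma)^\alpha$, so that for $\alpha<0$ the symbol is in fact $\eqsim(1+|\eta-\sigma|)^{|\alpha|}$, which is not bounded when $|\eta-\sigma|$ is comparable to $\sigma$. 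This does not break the argument: the surplus $(1+|\eta-\sigma|)^{|\alpha|}\leq1+|\eta-\sigma|$ is absorbed into the $\hat u(\eta-\sigma)$ slot, upgrading that factor from $\|u\|_{H^1}$ to $\|u\|_{H^2}$, which still lands inside the claimed $\bigO(\|u\|_{H^2}^2\|u\|_{H^k}^2)$; the same remark applies to your bound $|m(\xi-\eta,\eta)(\xi-\eta)|\lesssim\xi$ in the $G$-error term, where the extra $(1+|\xi-\eta|)^{|\alpha|}$ is likewise absorbed by $\hat u(\xi-\eta)$.
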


The commutator in \eqref{eq:tilde F + G} contains the cancellation we need, but in order to disclose it, we need to use a Fourier symmetry that is not present in the set \(\mathcal{A}_{2,+}^c\). We therefore as a final division of \(\R^3\) introduce the sets
\[
\mathcal{B}_1=\{(\xi,\eta,\sigma)\in\R^3 \colon \xi,\xi-\eta+\sigma,\sigma \geq 1\},
\]
and
\begin{align*}
\mathcal{B}_2=\big\{(\xi,\eta,\sigma)\in\R^3 \colon |\sigma-\eta|+|\xi-\sigma| &\leq {\textstyle\frac{1}{10}} \xi,\\ 
|\xi-\eta|+|\sigma-\xi| &\leq {\textstyle\frac{1}{10}} \sigma,\\ 
|\eta-\sigma|+|\xi-\eta| &\leq {\textstyle\frac{1}{10}} (\xi-\eta + \sigma) \big\},
\end{align*}
and their intersection \(\mathcal{B} = \mathcal{B}_1 \cap \mathcal{B}_2\). While the triangle inequality \eqref{pro:I} does not in general hold in \(\mathcal{A}_{2,+}^c\), the set \(\mathcal{B}\) has the advantage that in both \(\mathcal{A}_{2,+}^c \setminus \mathcal{B}\) and \(\mathcal{B} \setminus \mathcal{A}_{2,+}^c\), it holds, enabling us to exchange derivatives. Performing the change of variables \(\eta\mapsto \xi-\eta+\sigma\) on the first term in \eqref{eq:tilde F + G}, we may write 
\begin{equation}\label{eq:A cap B}
\begin{aligned}
&\mathcal{A}_2^c (\tilde F_{1,0}+\tilde G_{1,0})\\
&=\underbrace{\I  \int_{\mathcal{A}_{2,+}^c \cap \mathcal{B}} \left[\frac{m(\xi-\eta,\sigma)}{\xi-\eta+\sigma}-\frac{m(\xi-\eta,\eta)}{\xi}\right]  (\xi - \eta) \sigma^k \xi^{k+1} \, \diff M(\hat u)}_{[\mathcal{A}_{2,+}^c \cap \mathcal{B}](\tilde F_{1,0}+\tilde G_{1,0})\colon \text{the main commutator}}\\
&\quad  \underbrace{-\I  \int_{\mathcal{A}_{2,+}^c \setminus \mathcal{B}} m(\xi-\eta,\eta)  (\xi - \eta) \sigma^k \xi^{k} \, \diff M(\hat u)}_{[\mathcal{A}_{2,+}^c \setminus \mathcal{B}](\tilde F_{1,0}+\tilde G_{1,0})}\\
&\quad +\underbrace{\I  \int_{\mathcal{B} \setminus \mathcal{A}_{2,+}^c} \frac{m(\xi-\eta,\sigma)}{\xi-\eta+\sigma}  (\xi - \eta) \sigma^k \xi^{k+1} \diff M(\hat u)}_{[\mathcal{B} \setminus \mathcal{A}_{2,+}^c](\tilde F_{1,0}+\tilde G_{1,0})}.
\end{aligned} 
\end{equation}
By using \eqref{pro:I} exactly in the same way as before, we then arrive at our final expression. The reason for keeping \(\mathcal{A}_{2,+}^c \cap \mathcal{B}\) is both that it contains the most difficult term in the analysis, but also that \(\mathcal{A}_{2,+}^c \cap \mathcal{B}\) contains a symmetry in which \emph{two} orders the terms in the commutator cancel. This will all be made clear in the proof of Prop.~\ref{prop: F1+G1 full}. For now, we have:

\begin{proposition}\label{cor:thin sets} 
 \begin{align}
 \mathcal{A}_{2,+}^c (\tilde F_{1,0}+\tilde G_{1,0})= [ \mathcal{A}_{2,+}^c \cap \mathcal{B}] (\tilde F_{1,0}+\tilde G_{1,0})+\bigO(\|u\|_{H^{2}} \|u\|_{H^{3}} \|u\|_{H^k}^2).
 \end{align}
\end{proposition}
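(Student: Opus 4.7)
The plan is to control the two set-difference integrals in the decomposition \eqref{eq:A cap B} of $\mathcal{A}_{2,+}^c(\tilde F_{1,0}+\tilde G_{1,0})$, leaving only the main commutator on $\mathcal{A}_{2,+}^c \cap \mathcal{B}$. After dividing by the factor $2$ arising from $\mathcal{A}_2^c = 2\mathcal{A}_{2,+}^c$, the proposition reduces to showing
\[
\bigl|[\mathcal{A}_{2,+}^c\setminus\mathcal{B}]\tilde G_{1,0}\bigr| + \bigl|[\mathcal{B}\setminus\mathcal{A}_{2,+}^c]\tilde F_{1,0}\bigr| \lesssim \|u\|_{H^2}\|u\|_{H^3}\|u\|_{H^k}^2.
\]

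The crucial observation, as flagged in the text, is that while the strong equivalence \eqref{pro:II} on $\mathcal{A}_{2,+}^c \cap \mathcal{B}$ requires the delicate cancellation that is captured only by the full commutator, on the boundary sets the weaker triangle inequality \eqref{pro:I} is available, so that derivatives may be freely transferred from $\xi,\eta,\sigma$ onto the differences $\xi-\eta$ and $\eta-\sigma$. A case-by-case check confirms this: a point in $\mathcal{B}\setminus\mathcal{A}_{2,+}^c$ has $\xi, \xi-\eta+\sigma, \sigma \geq 1$, so its failure to lie in $\mathcal{A}_{2,+}^c$ must come either from $\eta < 1$ (in which case $|\xi-\eta|\geq\xi-1$ and $|\eta-\sigma|\geq\sigma-1$ immediately give \eqref{pro:I}) or from the failure of some $\mathcal{A}_{2,+}^c$-triangle condition (so that $|\xi-\eta|+|\eta-\sigma|\gtrsim\eta$, whence $|\xi|,|\sigma|\leq\eta+|\xi-\eta|+|\eta-\sigma|$ yields \eqref{pro:I}). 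A symmetric inspection works on $\mathcal{A}_{2,+}^c\setminus\mathcal{B}$, recalling that $\mathcal{B}$ is exactly the image of $\mathcal{A}_{2,+}^c$ under $\eta\mapsto\xi-\eta+\sigma$.

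Armed with \eqref{pro:I} on the thin sets, I would then apply the machinery of Lemma~\ref{lemma: A_2 estimates}. For the term over $\mathcal{A}_{2,+}^c\setminus\mathcal{B}$, the identity $m(\xi-\eta,\eta)(\xi-\eta) = \xi(\xi-\eta)/[2\varphi(\xi-\eta,\eta)]$ together with Corollary~\ref{cor:m} shows that the symbol is controlled after $(\xi-\eta)$ cancels the low-frequency singularity in $m$; using \eqref{pro:I} to transfer one power of $\xi$ onto $(1+|\xi-\eta|+|\eta-\sigma|)$ and then invoking Plancherel and Lemma~\ref{le:rehearsal}, one obtains the stated $H^2\times H^3\times H^k\times H^k$ bound. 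The term over $\mathcal{B}\setminus\mathcal{A}_{2,+}^c$ is handled identically, using the analogous identity $\frac{m(\xi-\eta,\sigma)}{\xi-\eta+\sigma}(\xi-\eta) = (\xi-\eta)/[2\varphi(\xi-\eta,\sigma)]$ and transferring the $\xi^{k+1}$ surplus derivative via \eqref{pro:I}. The main obstacle is the careful bookkeeping of the case analysis in the second step, especially in $\mathcal{B}\setminus\mathcal{A}_{2,+}^c$ whose defining inequalities live in the transformed coordinates; once that is done, the Fourier-side estimates are a routine application of tools already in hand.
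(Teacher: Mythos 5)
Your proposal is correct and follows essentially the same route as the paper: the paper's proof of Proposition~\ref{cor:thin sets} consists precisely of the decomposition \eqref{eq:A cap B} followed by the observation that the triangle inequality \eqref{pro:I} holds on both $\mathcal{A}_{2,+}^c\setminus\mathcal{B}$ and $\mathcal{B}\setminus\mathcal{A}_{2,+}^c$, so that the surplus derivative can be transferred to the differences $\xi-\eta$, $\eta-\sigma$ and the two thin-set integrals estimated by the same Fourier-side machinery as in Lemma~\ref{lemma: A_2 estimates}. Your case analysis verifying \eqref{pro:I} on the two difference sets is exactly the detail the paper leaves implicit, and your symbol bounds are consistent with Lemma~\ref{lemma:m} and Corollary~\ref{cor:m}.
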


\noindent \textbf{The main commutator.} We are now at the position to estimate the last term \([\mathcal{A}_{2,+}^c \cap \mathcal{B}]( \tilde F_{1,0}+\tilde G_{1,0})\) to complete the proof of Lemma \ref{lemma:energy estimates}. As mentioned above, this is a good commutator that will improve our estimates by two orders of cancellations, allowing us to move derivatives in an advantageous way, and finally kill the additional derivative.

\begin{proposition}\label{prop: F1+G1 full}
\[
|[\mathcal{A}_{2,+}^c \cap \mathcal{B}](\tilde F_{1,0}+\tilde G_{1,0})| \lesssim \|u\|_{H^1}\|u\|_{H^2} \|u\|_{H^k}^2.
\]
\end{proposition}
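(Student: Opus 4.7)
The plan is to exploit the algebraic identity \(m(a,b)/(a+b) = 1/(2\varphi(a,b))\), which is immediate from the definition \eqref{eq:multiplier} of \(m\). This rewrites the bracketed commutator symbol in \eqref{eq:A cap B} as
\[
\frac{m(\xi-\eta,\sigma)}{\xi-\eta+\sigma} - \frac{m(\xi-\eta,\eta)}{\xi} = \frac{1}{2}\cdot\frac{\varphi(\xi-\eta,\eta)-\varphi(\xi-\eta,\sigma)}{\varphi(\xi-\eta,\sigma)\,\varphi(\xi-\eta,\eta)}.
\]
Because the first slot of \(\varphi\) is identical in both terms, the numerator collapses, with \(\psi(x):=p(x)x\), to
\[
\varphi(\xi-\eta,\eta)-\varphi(\xi-\eta,\sigma) = \bigl[\psi(\eta)-\psi(\sigma)\bigr] - \bigl[\psi(\xi)-\psi(\xi-\eta+\sigma)\bigr].
\]
The critical observation is that the two bracketed differences share the common increment \(\eta-\sigma = \xi-(\xi-\eta+\sigma)\). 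After one Taylor expansion in this increment, the two resulting integrands of \(\psi'\) are evaluated at points that differ by exactly \(\xi-\eta\); a second Taylor expansion therefore produces the two-order factorisation
\[
\varphi(\xi-\eta,\eta)-\varphi(\xi-\eta,\sigma) = -(\eta-\sigma)(\xi-\eta)\int_0^1\!\!\int_0^1 \psi''\bigl(\sigma+t(\eta-\sigma)+s(\xi-\eta)\bigr)\,\diff s\,\diff t.
\]
This is precisely the ``two orders'' of cancellation alluded to after \eqref{eq:A cap B}: the factor \(\eta-\sigma\) will become a derivative on the small-frequency piece \(\hat u(\eta-\sigma)\), while \(\xi-\eta\) will cancel the singular \(|\xi-\eta|^2\) that appears from the product of the two \(\varphi\)'s in the denominator.

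I would then collect quantitative size estimates. In \(\mathcal{A}_{2,+}^c\cap \mathcal{B}\), all of \(\xi,\eta,\sigma,\xi-\eta+\sigma\) are positive and pairwise equivalent, while \(|\xi-\eta|+|\eta-\sigma|\lesssim \xi/10\); hence the argument of \(\psi''\) has size \(\eqsim \xi\), and (A2) combined with \(\psi''(x)=p''(x)x+2p'(x)\) yields \(|\psi''(x)|\lesssim |x|^{\alpha-1}\eqsim |\xi|^{\alpha-1}\). On the other hand, Lemma~\ref{lemma:m} applied with small slot \(|\xi-\eta|\) and large slot \(|b|\eqsim |\xi|\) (for \(b\in\{\eta,\sigma\}\)) gives
\[
|\varphi(\xi-\eta,b)|\eqsim |\xi-\eta|\,(1+|\xi|^\alpha),
\]
so the full commutator symbol obeys
\[
\left|\frac{m(\xi-\eta,\sigma)}{\xi-\eta+\sigma}-\frac{m(\xi-\eta,\eta)}{\xi}\right| \lesssim \frac{|\eta-\sigma|}{|\xi-\eta|}\cdot\frac{|\xi|^{\alpha-1}}{(1+|\xi|^\alpha)^2}.
\]
Multiplying by the integrand prefactor \((\xi-\eta)\sigma^k \xi^{k+1}\) absorbs the singular \(|\xi-\eta|^{-1}\); a short case check for \(\alpha>0\) and \(\alpha<0\) shows that \(|\xi|^{k+1}\cdot |\xi|^{\alpha-1}(1+|\xi|^\alpha)^{-2} \lesssim |\xi|^k\) uniformly across \(\alpha\in[-1,1]\setminus\{0\}\), so the extra derivative has been killed.

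The conclusion is then a standard quartic estimate: by Plancherel the resulting integral is, up to a uniformly bounded multiplier, the \(L^2\) pairing
\[
\bigl(u\cdot\partial_x u\cdot\partial_x^k u,\ \partial_x^k u\bigr)_2,
\]
and H\"older together with the Sobolev embedding \(H^1(\R)\hookrightarrow L^\infty(\R)\) delivers
\[
|[\mathcal{A}_{2,+}^c \cap \mathcal{B}](\tilde F_{1,0}+\tilde G_{1,0})| \lesssim \|u\|_{L^\infty}\|\partial_x u\|_{L^\infty}\|\partial_x^k u\|_{L^2}^2 \lesssim \|u\|_{H^1}\|u\|_{H^2}\|u\|_{H^k}^2,
\]
as claimed.

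The main obstacle is the quantitative commutator analysis: one must verify that the two-order gain \((\eta-\sigma)(\xi-\eta)\) produced by the double Taylor expansion \emph{exactly} matches the singular \(|\xi-\eta|^{-2}\) created by \(\varphi(\xi-\eta,\sigma)\varphi(\xi-\eta,\eta)\) while simultaneously absorbing the extra derivative \(\xi^{k+1}\), uniformly in \(\alpha\). The endpoint \(\alpha=-1\) is the tightest, leaving no margin to spare; handling it together with the non-polynomial factor \(|\xi|^{\alpha-1}(1+|\xi|^\alpha)^{-2}\) uniformly across the dispersive range is the delicate quantitative heart of the argument.
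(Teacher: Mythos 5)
Your argument is essentially the paper's own proof: the paper likewise writes the commutator as a difference of reciprocals of the denominator \(\varphi\), extracts the two-order factor \((\eta-\sigma)(\xi-\eta)\) by a double Taylor expansion of \(\psi(x)=p(x)x\) over the parallelogram spanned by the two increments, bounds the product of the two \(\varphi\)'s from below via Lemma~\ref{lemma:m}/Corollary~\ref{cor:m}, and closes with the same \(H^1\times H^2\times (H^k)^2\) quartic estimate. The one caveat — present in the same form in the paper's own display — is that for \(\alpha<0\) and \(1\ll|\xi-\eta|\le\xi/10\) one actually has \(|\varphi(\xi-\eta,\eta)|\eqsim|\xi-\eta|^{1+\alpha}\) rather than \(|\xi-\eta|(1+\xi^\alpha)\) (case (iii) of Lemma~\ref{lemma:m}, where the correct factor is \((1+|\xi-\eta|)^\alpha+(1+|\eta|)^\alpha\), not \(1+r^\alpha\)), so a surplus \(|\xi-\eta|^{2|\alpha|}\le|\xi-\eta|^{2}\) survives in the bound; it is harmlessly absorbed by the low-order factor \(\hat u(\xi-\eta)\), costing at worst \(\|u\|_{H^{3}}\) in place of \(\|u\|_{H^1}\), which still suffices for Lemma~\ref{lemma:energy estimates}.
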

\begin{proof} 
Let	
\begin{align*}
N(\xi,\eta,\sigma) =\frac{m(\xi-\eta,\eta)}{\xi}
-\frac{m(\xi-\eta,\sigma)}{\xi-\eta+\sigma}.
\end{align*} 
In view of \eqref{eq:multiplier} one then calculates that
\begin{align*}
N(\xi,\eta,\sigma)&=\frac{1}{2\big[p(\xi-\eta)(\xi-\eta)+p(\eta)\eta-p(\xi)\xi\big]}\\
&\quad-\frac{1}{2\big[p(\xi-\eta)(\xi-\eta)+p(\sigma)\sigma - p(\xi-\eta+\sigma)(\xi-\eta+\sigma) \big]}\\
&=\frac{p(\sigma)\sigma - p(\xi-\eta+\sigma)(\xi-\eta+\sigma) - \big[p(\eta)\eta-p(\xi)\xi\big]}{2\big[p(\xi-\eta)(\xi-\eta)+p(\eta)\eta-p(\xi)\xi\big]}\\
&\quad \times \frac{1}{\big[p(\xi-\eta)(\xi-\eta)+p(\sigma)\sigma - p(\xi-\eta+\sigma)(\xi-\eta+\sigma) \big]}.
\end{align*}
If we let
\[
U(\xi,\eta,\sigma)=\big[p(\sigma)\sigma - p(\xi-\eta+\sigma)(\xi-\eta+\sigma)\big] - \big[p(\eta)\eta-p(\xi)\xi\big]
\]
be the numerator above, it follows from the definition of \(m\) that
\begin{align}\label{39}
|N(\xi,\eta,\sigma)|\lesssim \frac{|m(\xi-\eta,\eta) m(\xi-\eta,\sigma)|}{\xi (\xi-\eta+\sigma)}|U(\xi,\eta,\sigma)|.
\end{align}
We first handle \(U(\xi,\eta,\sigma)\). As we are in \(\mathcal{A}_{2,+}^c\), we may write
\begin{equation*}
\xi=(1+\mu)\eta \quad\text{ and }\quad \sigma=(1+\nu)\eta,
\end{equation*}
where \(|\mu|,|\nu|\leq {\textstyle \frac{1}{10}}\) are uniformly small. A Taylor expansion of the \(p\)-terms in \(U\) then reads
\begin{align*}
U(\xi,\eta,\sigma) &= \eta \Big[ p((1+\nu)\eta ) (1+\nu)  - p((1+\mu + \nu)\eta) (1+\mu + \nu) \\ 
&\quad - \left[ p(\eta) -  p((1+\mu)\eta) (1+\mu)  \right] \Big]\\
&= -\eta  \int_{0}^1 \int_0^1  \frac{\diff}{\diff s} \frac{\diff}{\diff r} \Big[ p((1+s \nu + r \mu) \eta) (1 + s\nu + r\mu) \Big]  \diff r \, \diff s\\
&= -\eta  \int_{0}^1 \int_0^1  \Big[ p''((1+s \nu + r \mu) \eta) \mu \nu \eta^2 (1 + s\nu + r\mu)\\ 
&\qquad\qquad\qquad  + 2 p'((1+s \nu + r \mu) \eta) \mu \nu \eta  \Big]  \diff r \, \diff s,
\end{align*}
which according to the asymptotic assumptions on \(p\) may be estimated by
\begin{equation}\label{40}
\begin{aligned}
|U(\xi,\eta,\sigma)| &\lesssim \eta^3 |\mu \nu| \eta^{\alpha -2} + \eta^2 |\mu \nu| \eta^{\alpha-1}\\ 
&= |\xi - \eta| |\sigma- \eta| \eta^{\alpha -1},
\end{aligned}
\end{equation}
where we recall that \(\eta \gtrsim 1\) in \(\mathcal{A}_{2,+}^c\). In effect, two derivatives have been moved from \(\xi \eqsim \sigma\) to \(\xi - \eta\) and \(\eta - \sigma\).

We next treat the quotient with \(m\) in the right-hand side of \eqref{39}. Using the equivalence and positive size of \(\xi\), \(\sigma\), \(\eta\) and \(\xi-\eta+\sigma\), the estimate for \(m\) in Lemma~\ref{lemma:equivalent norm} directly yields
\begin{align*}
&\frac{|m(\xi-\eta,\eta) m(\xi-\eta,\sigma)|}{\xi (\xi-\eta+\sigma)} \eqsim \left| \frac{m(\xi-\eta,\eta)}{\xi } \right|^2\\
&\eqsim \left( \frac{1}{\xi^2 | \xi - \eta|} \left[ \frac{\xi^2}{(1+ |\xi|)^\alpha + (1+ |\xi - \eta|)^\alpha} \right] \right)^2\\
&\eqsim \frac{1}{ | \xi - \eta|^2} \left[ \frac{1}{1 + |\xi|^{2\alpha} +  |\xi - \eta|^{2\alpha}} \right] .
\end{align*}
By multiplying with \eqref{40}, and using \(\xi \eqsim \eta\), we get the combined estimate 
\[
|N(\xi,\eta, \sigma)| \lesssim \frac{|\sigma - \eta|}{|\xi - \eta|} \min\left\{ \xi^{-1\pm \alpha }\right\} \leq  \frac{1}{\xi} \frac{|\sigma - \eta|}{|\xi - \eta|},
\]
which is to be multiplied with \( (\xi - \eta) \sigma^k \xi^{k+1} \) in \eqref{eq:A cap B}. The combined upper bound \(|\sigma - \eta| \sigma^k \xi^k \) on the multiplier in \([\mathcal{A}_{2,+}^c \cap \mathcal{B}](\tilde F_{1,0}+\tilde G_{1,0})\) then gives an \(H^1 \times H^2 \times (H^k)^2\)-estimate, and completes the proof of Proposition \ref{prop: F1+G1 full}. 
\end{proof}

\section*{Acknowledgment}
\noindent The authors are grateful to the referee for the comments and suggestions that helped improve the exposition of the paper.


\begin{thebibliography}{10}

\bibitem{Abdelouhab1989nonlocal}
{\sc L.~Abdelouhab, J.~L. Bona, M.~Felland, and J.-C. Saut}, {\em Nonlocal
  models for nonlinear, dispersive waves.}, Phys. D 40 (1989), no. 3,
  360--392.,  (1989).

\bibitem{MR3460636}
{\sc T.~Alazard and J.-M. Delort}, {\em Sobolev estimates for two dimensional
  gravity water waves}, Ast\'erisque,  (2015), pp.~viii+241.

\bibitem{MR3485840}
{\sc M.~N. Arnesen}, {\em Existence of solitary-wave solutions to nonlocal
  equations}, Discrete Contin. Dyn. Syst., 36 (2016), pp.~3483--3510.

\bibitem{MR3839269}
{\sc M.~Berti and J.-M. Delort}, {\em Almost global solutions of
  capillary-gravity water waves equations on the circle}, vol.~24 of Lecture
  Notes of the Unione Matematica Italiana, Springer, Cham; Unione Matematica
  Italiana, [Bologna], 2018.

\bibitem{MR4246389}
{\sc M.~Berti, R.~Feola, and L.~Franzoi}, {\em Quadratic life span of periodic
  gravity-capillary water waves}, Water Waves, 3 (2021), pp.~85--115.

\bibitem{BD18}
{\sc G.~Bruell and R.~Dhara}, {\em Waves of maximal height for a class of
  nonlocal equations with homogeneous symbols}, Indiana Univ. Math. J. 70 (2021), pp.~711-742.


\bibitem{MR2056326}
{\sc J.-M. Delort and J.~Szeftel}, {\em Long-time existence for small data
  nonlinear {K}lein-{G}ordon equations on tori and spheres}, Int. Math. Res.
  Not.,  (2004), pp.~1897--1966.

\bibitem{MR3841973}
{\sc V.~Duch\^{e}ne, D.~Nilsson, and E.~Wahl\'{e}n}, {\em Solitary wave
  solutions to a class of modified {G}reen-{N}aghdi systems}, J. Math. Fluid
  Mech., 20 (2018), pp.~1059--1091.

\bibitem{EGW11}
{\sc M.~Ehrnstr{\"o}m, M.~D. Groves, and E.~Wahl\'en}, {\em On the existence
  and stability of solitary-wave solutions to a class of evolution equations of
  {W}hitham type}, Nonlinearity, 25 (2012), pp.~1--34.

\bibitem{EJMR19}
{\sc M.~{Ehrnstr{\"o}m}, M.~A. {Johnson}, O.~I.~H. Maehlen, and F.~Remonato},
  {\em On the bifurcation diagram of the capillary-gravity {W}hitham equation},
  Water Waves, 1 (2019), pp.~275--313.

\bibitem{MR4002168}
{\sc M.~Ehrnstr\"{o}m and E.~Wahl\'{e}n}, {\em On {W}hitham's conjecture of a
  highest cusped wave for a nonlocal dispersive equation}, Ann. Inst. H.
  Poincar\'{e} Anal. Non Lin\'{e}aire, 36 (2019), pp.~1603--1637.

\bibitem{MR3995034}
{\sc M.~Ehrnstr\"{o}m and Y.~Wang}, {\em Enhanced existence time of solutions
  to the fractional {K}orteweg--de {V}ries equation}, SIAM J. Math. Anal., 51
  (2019), pp.~3298--3323.

\bibitem{feola2020long}
{\sc R.~Feola, B.~Gr{\'e}bert, and F.~Iandoli}, {\em Long time solutions for
  quasi-linear hamiltonian perturbations of schr\"odinger and klein-gordon
  equations on tori}, 2020.

\bibitem{MR2993751}
{\sc P.~Germain, N.~Masmoudi, and J.~Shatah}, {\em Global solutions for the
  gravity water waves equation in dimension 3}, Ann. of Math. (2), 175 (2012),
  pp.~691--754.

\bibitem{MR4072387}
{\sc F.~Hildrum}, {\em Solitary waves in dispersive evolution equations of
  {W}hitham type with nonlinearities of mild regularity}, Nonlinearity, 33
  (2020), pp.~1594--1624.

\bibitem{MR2982741}
{\sc J.~K. Hunter and M.~Ifrim}, {\em Enhanced life span of smooth solutions of
  a {B}urgers-{H}ilbert equation}, SIAM J. Math. Anal., 44 (2012),
  pp.~2039--2052.

\bibitem{MR3535894}
{\sc J.~K. Hunter, M.~Ifrim, and D.~Tataru}, {\em Two dimensional water waves
  in holomorphic coordinates}, Comm. Math. Phys., 346 (2016), pp.~483--552.

\bibitem{MR3348783}
{\sc J.~K. Hunter, M.~Ifrim, D.~Tataru, and T.~K. Wong}, {\em Long time
  solutions for a {B}urgers-{H}ilbert equation via a modified energy method},
  Proc. Amer. Math. Soc., 143 (2015), pp.~3407--3412.

\bibitem{MR3682673}
{\sc V.~M. Hur}, {\em Wave breaking in the {W}hitham equation}, Adv. Math., 317
  (2017), pp.~410--437.

\bibitem{MR3667289}
{\sc M.~Ifrim and D.~Tataru}, {\em The lifespan of small data solutions in two
  dimensional capillary water waves}, Arch. Ration. Mech. Anal., 225 (2017),
  pp.~1279--1346.

\bibitem{MR3314514}
{\sc A.~D. Ionescu and F.~Pusateri}, {\em Global solutions for the gravity
  water waves system in 2d}, Invent. Math., 199 (2015), pp.~653--804.

\bibitem{MR3862598}
\leavevmode\vrule height 2pt depth -1.6pt width 23pt, {\em Global regularity
  for 2{D} water waves with surface tension}, Mem. Amer. Math. Soc., 256
  (2018), pp.~v+124.

\bibitem{MR3962880}
{\sc A.~D. Ionescu and F.~Pusateri}, {\em Long-time existence for
  multi-dimensional periodic water waves}, Geom. Funct. Anal., 29 (2019),
  pp.~811--870.

\bibitem{MR3317254}
{\sc C.~Klein and J.-C. Saut}, {\em A numerical approach to blow-up issues for
  dispersive perturbations of {B}urgers' equation}, Phys. D, 295/296 (2015),
  pp.~46--65.

\bibitem{lannes2013water}
{\sc D.~Lannes}, {\em The water waves problem}, Mathematical surveys and
  monographs, 188 (2013).

\bibitem{MR3188389}
{\sc F.~Linares, D.~Pilod, and J.-C. Saut}, {\em Dispersive perturbations of
  {B}urgers and hyperbolic equations {I}: {L}ocal theory}, SIAM J. Math. Anal.,
  46 (2014), pp.~1505--1537.

\bibitem{MR4097537}
{\sc O.~I.~H. Maehlen}, {\em Solitary waves for weakly dispersive equations
  with inhomogeneous nonlinearities}, Discrete Contin. Dyn. Syst., 40 (2020),
  pp.~4113--4130.

\bibitem{MR3906854}
{\sc L.~Molinet, D.~Pilod, and S.~Vento}, {\em On well-posedness for some
  dispersive perturbations of {B}urgers' equation}, Ann. Inst. H. Poincar\'{e}
  Anal. Non Lin\'{e}aire, 35 (2018), pp.~1719--1756.

\bibitem{MR1885293}
{\sc L.~Molinet, J.~C. Saut, and N.~Tzvetkov}, {\em Ill-posedness issues for
  the {B}enjamin-{O}no and related equations}, SIAM J. Math. Anal., 33 (2001),
  pp.~982--988.

\bibitem{DN19_extended}
{\sc D.~Nilsson}, {\em Extended lifespan of the fractional {BBM} equation}, Asymptotic Analysis, vol. Pre-press, no. Pre-press, pp. 1--21, 2021.


\bibitem{MR533234}
{\sc J.-C. Saut}, {\em Sur quelques g\'en\'eralisations de l'\'equation de
  {K}orteweg-de {V}ries}, J. Math. Pures Appl. (9), 58 (1979), pp.~21--61.

\bibitem{SW20}
{\sc J.-C. {Saut} and Y.~{Wang}}, {\em The wave breaking for {W}hitham-type
  equations revisited},
\newblock arXiv:2006.03803, to appear in SIAM. J. Math. Anal.

\bibitem{MR803256}
{\sc J.~Shatah}, {\em Normal forms and quadratic nonlinear {K}lein-{G}ordon
  equations}, Comm. Pure Appl. Math., 38 (1985), pp.~685--696.

\bibitem{MR4061635}
{\sc A.~Stefanov and J.~D. Wright}, {\em Small {A}mplitude {T}raveling {W}aves
  in the {F}ull-{D}ispersion {W}hitham {E}quation}, J. Dynam. Differential
  Equations, 32 (2020), pp.~85--99.

\bibitem{MR2507638}
{\sc S.~Wu}, {\em Almost global wellposedness of the 2-{D} full water wave
  problem}, Invent. Math., 177 (2009), pp.~45--135.

\end{thebibliography}
\end{document}